\begin{document}

\title{Quotient cohomology for tiling spaces}

\author{Marcy Barge} 
\address{Department of Mathematics, Montana State University,
Bozeman, MT 59717, USA}
\email{barge@math.montana.edu}

\author{Lorenzo Sadun}
\address{Department of Mathematics, University of Texas, 
Austin, TX 78712, USA}
\email{sadun@math.utexas.edu}
\thanks{The work of the second author is partially supported by 
NSF grant DMS-0701055}
\subjclass[2010]{Primary: 37B50, 55N05
Secondary: 54H20, 37B10, 55N35, 52C23}
\keywords{Cohomology, relative, tiling spaces, finite type,
substitution}
\date{August 26, 2011}

%
%%Version 12, for publication in NYJM, with typographical
%%errors noted by the referee corrected.  
%

\begin{abstract} 
We define a relative version of tiling cohomology for the purpose of
comparing the topology of tiling dynamical systems when one is a factor of the 
other.  We illustrate this with examples, and 
outline a method for computing the cohomology of 
tiling spaces of finite type. 
\end{abstract} 

\maketitle

%\textwidth=6.25truein
%\textheight=8.5truein
%\hoffset=-.5truein
%\voffset=-.5truein 
\newcommand{\R}{\mathbb{R}}
\newcommand{\N}{\mathbb{N}}
\newcommand{\Q}{\mathbb{Q}}
\newcommand{\Z}{\mathbb{Z}}
\newcommand{\cT}{\mathbb{T}}
\newcommand{\T}{\mathcal{T}}
\newcommand{\W}{\mathcal{W}}
\newcommand{\A}{\mathcal{A}}
\newcommand{\B}{\mathcal{B}}
\newcommand{\C}{\mathcal{C}}
\newcommand{\cL}{\mathcal{L}}
\newcommand{\M}{\mathcal{M}}
\newcommand{\cN}{\mathcal{N}}
\newcommand{\cH}{\mathcal{H}}
\newcommand{\cP}{\mathcal{P}}
\newcommand{\cS}{\mathcal{S}}
\newcommand{\V}{\mathcal{V}}
\newcommand{\F}{\mathcal{F}}
\newcommand{\G}{\mathcal{G}}
\newcommand{\Cech}{{\v Cech}{} }
\newcommand{\Om}{\Omega}
\newcommand{\cU}{\mathcal{U}}
\newcommand{\cV}{\mathcal{V}}

\newcommand{\vp}{\varphi}
\newcommand{\tv}{\tilde{\varphi}}
\newcommand{\tA}{\tilde{\A}}
\newcommand{\tT}{\tilde{T}}
\newcommand{\tf}{\tilde{f}}
\newcommand{\tsigma}{\tilde{\sigma}}
\newcommand{\hf}{\hat{f}}
\newcommand{\td}{\tilde}
\newcommand{\eps}{\epsilon}
\newcommand{\oK}{\overline{K}}
\newcommand{\half}{\frac{1}{2}}
\newcommand{\quarter}{\frac{1}{4}}

\newcommand{\larr}{\left( \begin{array}{c}}
\newcommand{\rarr}{\end{array} \right) }

\newcommand{\lsqarr}{\left[ \begin{array}{c}} 
\newcommand{\rsqarr}{\end{array} \right]} 

\newcommand{\uv}{(u, v)^T}

\newcommand{\arrow}{\rightarrow}
\newcommand{\larrow}{\leftarrow}
\newcommand{\inv}{\varprojlim}
\newcommand{\dir}{\varinjlim}
\newcommand{\seabox}{{\framebox{$\searrow$}}}
\newcommand{\neabox}{{\framebox{$\nearrow$}}}
\newcommand{\swabox}{{\framebox{$\swarrow$}}}
\newcommand{\nwabox}{{\framebox{$\nwarrow$}}}
\newcommand{\neswarrow}{{\nearrow\mkern-18mu\swarrow}}
\newcommand{\nwsearrow}{{\nwarrow\mkern-18mu\searrow}}
\newcommand{\neswabox}{{\framebox{$\nearrow\mkern-18mu\swarrow$}}}
\newcommand{\nwseabox}{{\framebox{$\nwarrow\mkern-18mu\searrow$}}}
\newcommand{\ER}{{\hbox{\tiny ER}}}
\newcommand{\nebox}[4]{\framebox[24pt][c]{$
\begin{smallmatrix}
{\scriptscriptstyle #3}\\
{\scriptscriptstyle #1}\!{\displaystyle\nearrow} {\scriptscriptstyle #2}\\
{\scriptscriptstyle #4}\end{smallmatrix}
$}}
\newcommand{\sebox}[4]{\framebox[24pt][c]{$
\begin{smallmatrix}
{\scriptscriptstyle #3}\\
{\scriptscriptstyle #1}\!{\displaystyle\searrow} {\scriptscriptstyle #2}\\
{\scriptscriptstyle #4}\end{smallmatrix}
$}}
\newcommand{\nwbox}[4]{\framebox[24pt][c]{$
\begin{smallmatrix}
{\scriptscriptstyle #3}\\
{\scriptscriptstyle #1}\!{\displaystyle\nwarrow} {\scriptscriptstyle #2}\\
{\scriptscriptstyle #4}\end{smallmatrix}
$}}
\newcommand{\swbox}[4]{\framebox[24pt][c]{$
\begin{smallmatrix}
{\scriptscriptstyle #3}\\
{\scriptscriptstyle #1}\!{\displaystyle\swarrow} {\scriptscriptstyle #2}\\
{\scriptscriptstyle #4}\end{smallmatrix}
$}}
\newcommand{\neswbox}[4]{\framebox[24pt][c]{$
\begin{smallmatrix}
{\scriptscriptstyle #3}\\
{\scriptscriptstyle #1}\!{\displaystyle\neswarrow} {\scriptscriptstyle #2}\\
{\scriptscriptstyle #4}\end{smallmatrix}
$}}
\newcommand{\nwsebox}[4]{\framebox[24pt][c]{$
\begin{smallmatrix}
{\scriptscriptstyle #3}\\
{\scriptscriptstyle #1}\!{\displaystyle\nwsearrow} {\scriptscriptstyle #2}\\
{\scriptscriptstyle #4}\end{smallmatrix}
$}}
\newcommand{\neboxx}[2]{\nebox{\phantom{1}}{\phantom{1}}{#1}{#2}}
\newcommand{\nwboxx}[2]{\nwbox{\phantom{1}}{\phantom{1}}{#1}{#2}}
\newcommand{\seboxx}[2]{\sebox{\phantom{1}}{\phantom{1}}{#1}{#2}}
\newcommand{\swboxx}[2]{\swbox{\phantom{1}}{\phantom{1}}{#1}{#2}}
\newcommand{\neswboxx}[2]{\neswbox{#1}{#2}{\phantom{1}}{\phantom{1}}}
\newcommand{\nwseboxx}[2]{\nwsebox{#1}{#2}{\phantom{1}}{\phantom{1}}}
\newcommand{\neboxxx}{\neboxx{\phantom{1}}{\phantom{1}}}
\newcommand{\nwboxxx}{\nwboxx{\phantom{1}}{\phantom{1}}}
\newcommand{\seboxxx}{\seboxx{\phantom{1}}{\phantom{1}}}
\newcommand{\swboxxx}{\swboxx{\phantom{1}}{\phantom{1}}}
\newcommand{\neswboxxx}{\neswboxx{\phantom{1}}{\phantom{1}}}
\newcommand{\nwseboxxx}{\nwseboxx{\phantom{1}}{\phantom{1}}}

\newcommand{\vvec}[2]{\left ( \begin{smallmatrix} #1 \cr #2 \end{smallmatrix}
\right )}
\newcommand{\vvvec}[3]{\left ( \begin{smallmatrix} #1 \cr #2 \cr #3 
\end{smallmatrix}
\right )}
\newcommand{\vvvvec}[4]{\left ( \begin{smallmatrix} #1 \cr #2 \cr #3 \cr #4
\end{smallmatrix}
\right )}

\newtheorem{theorem}{Theorem}
\newtheorem{cor}[theorem]{Corollary}
\newtheorem{corollary}[theorem]{Corollary}
\newtheorem{lemma}[theorem]{Lemma} 
\newtheorem{prop}[theorem]{Proposition}
\theoremstyle{definition}
\newtheorem{example}{Example}
\newtheorem{remark}{Remark}

\section{Introduction}
Since its development, cohomology has been an
essential tool of algebraic topology. It is a topological invariant that can
tell spaces apart (both with the groups and with the ring structure). It is
computable by a variety of cut-and-paste rules. 
It is a functor that relates two or more 
spaces and the maps between them. Finally, it is the setting for other 
topological structures, such as characteristic classes. 

The cohomology of tiling spaces is far less developed, and in some ways
resembles the state of abstract cohomology in the mid-20th century. Mostly it
has been used to tell spaces apart. 
There has been little progress in using
cut-and-paste arguments to compute anything, and most computations have 
relied on inverse limit structures. It is only used to study one space at a
time, not in a functorial setting. We have a limited understanding of 
what cohomology tells us, and what other problems can be addressed using
cohomology. (However, see \cite{B, BBG, CGU, CS, ER} 
for some applications to gap labeling, 
deformations, spaces of measures, and exact regularity.) 

This paper is an attempt to remedy this deficiency. By specializing the 
algebraic mapping cylinder and mapping cone construction to tiling theory, 
we develop a relative version of tiling cohomology, which we call {\em
quotient cohomology}. We then show how to use quotient cohomology 
to relate similar tiling spaces. 

In Section \ref{defs}, we lay out the definitions and basic properties 
of quotient cohomology. In Section \ref{examples} we illustrate the
formalism with some simple examples, both from basic topology and from
one dimensional tilings. In 
Section \ref{tools} we develop the tools needed to handle more
complicated problems. The key tool for tiling theory is Proposition 
\ref{Cohomology.of.suspension.applied}, which describes how to get
the quotient cohomology of two tiling spaces that differ only on the
suspension of a lower-dimensional tiling space. In Section \ref{chairs}
we examine a family of nine tiling spaces that includes the 2-dimensional
dyadic solenoid and the ``chair'' substitution tiling. By applying
Proposition 4 repeatedly, we relate the cohomology of each space to that
of the dyadic solenoid.  Finally, in Section \ref{finite} we explore
the cohomology of tiling spaces of finite type, a class of tiling spaces
that has previously defied analysis.

\section{Definitions}\label{defs}

A {\em tiling} of $\R^d$ is a collection of closed topological
disks, called tiles, such that tiles overlap only on their boundaries
and such that the union of all the tiles is $\R^d$. In addition to
their position and geometric shape, tiles may carry labels. The translation
group $\R^d$ transforms a tiling into a different tiling 
by moving all tiles simultaneously. If $\T$ is a tiling, then $\T -v$ is
the tiling translated by $v \in \R^d$. We endow the orbit of $\T$ under
translation with a metric where two tilings are $\epsilon$-close if they agree,
up to a translation by $\epsilon$ or less, on a ball of radius $\epsilon^{-1}$
around the orgin. The completion $X_\T$ of the orbit of $\T$ is called 
the {\em hull} of $\T$, or the {\em tiling space} associated with
$\T$. 
Locally, $X_\T$ is the product of $\R^d$ with a totally disconnected 
space, typically a Cantor set. 
$X_\T$, equipped with the action of the translation group
$\R^d$, is a {\em tiling dynamical system}. 
Most of the tiling dynamical systems in the literature 
are compact, minimal and uniquely ergodic.

Substitutions provide an important method for the generation of
tilings. A {\em $d$-dimensional substitution} is a recipe that
linearly inflates each of a finite collection of $d$-dimensional
prototiles and specifies a tiling of each of the inflated prototiles
by translates of the prototiles. A tiling of $\R^d$ obtained as a
limit of repeated application of a substitution is called a {\em
  substitution tiling} and its hull is a {\em substitution tiling
  space}. Under mild assumptions (\cite{sol}), a substitution induces
a {\em substitution homeomorphism} on its tiling space.

Besides their intrinsic interest, tiling dynamical systems model a variety of
structures in dynamics; for instance, every 1-dimensional orientable
expanding attractor is topologically conjugate to either the shift
homeomorphism on a solenoid or the substitution homeomorphism on a
substitution tiling space \cite{ap}.  For background information on
tiling spaces and their topology, see \cite{book}.

If $X$ and $Y$ are topological spaces and $f: X \to Y$ is an
injection, then the relative (co)homology groups $H_k(Y,X)$ and
$H^k(Y,X)$ relate the (co)homology of $X$ and $Y$ via long exact sequences
\begin{eqnarray*}
&& \cdots \to H_{k+1}(Y,X) \to H_k(X) \xrightarrow{f_*} H_k(Y) \to H_k(Y,X) 
\to \cdots, \\
&& \cdots \to H^k(Y,X) \to H^k(Y) \xrightarrow{f^*} H^k(X) 
\to H^{k+1}(Y,X) \to \cdots. 
\end{eqnarray*}
Factor maps between minimal dynamical systems are surjections, since
the image of each orbit is dense, but typically are not
injections. To study such spaces, we need a different tool.

Let $f:X \to Y$ be a quotient map such that the pullback $f^*$ is
injective on cochains. This is the typical situation for covering
spaces, for branched covers, and for factor maps
between tiling spaces. When dealing with tiling spaces, ``cochains'' 
can either mean \Cech
cochains or pattern-equivariant cochains \cite{K, KP, integer}; 
our arguments apply equally well to both. Define the
cochain group $C_Q^k(X,Y)$ to be $C^k(X)/f^*(C^k(Y))$.  
The usual coboundary operator
sends $C_Q^k(X,Y)$ to $C_Q^{k+1}(X,Y)$, and we define the {\em quotient
cohomology} $H_Q^k(X,Y)$ to be the kernel of the coboundary modulo the image.
By the snake lemma,
the short exact sequence of
cochain complexes
\begin{equation*} 0 \to C^k(Y) \xrightarrow{f^*} C^k(X) \to C_Q^k(X,Y) \to 0
\end{equation*}
induces a long exact sequence 
\begin{equation}\label{LES1} \cdots \to H^{k-1}_Q(X,Y) \to 
H^k(Y) \xrightarrow{f^*} H^k(X) 
\to H^k_Q(X,Y) 
\to \cdots
\end{equation}
relating the cohomologies of $X$ and $Y$ to $H_Q^*(X,Y)$. 

Quotient cohomology is related to an ordinary relative cohomology
group involving the mapping cylinder $M_f = (X \times [0,1])\coprod Y/\sim$,
where $(x,1)\sim f(x)$, or to the reduced cohomology of a mapping
cone, where we collapse $X \times \{0\} \subset M_f$ to a single point. 
$M_f$ is homotopy equivalent to $Y$, and the inclusion
$i:X \to M_f$, $i(x)=(x,0)$ is homotopically the same as $f$. This yields
the (standard) long exact sequence in relative cohomology
\begin{equation}\label{LES2}
\cdots H^k(M_f,X) \to H^k(M_f) \xrightarrow{i^*} H^k(X) \to H^{k+1}(M_f,X)
\to \cdots.
\end{equation}
Applying the Five Lemma to 
the long exact sequences (\ref{LES1}) and (\ref{LES2}) 
and noting that
$H^k(M_f) \simeq H^k(Y)$, with $i^*$ essentially the same as $f^*$,
we see that $H_Q^k(X,Y)$ equals $H^{k+1}(M_f, X)$.  

Quotient cohomology can also be viewed as the cohomology of the
algebraic mapping cone of $X$ and $Y$ \cite{Weibel}.
Specifically, let $C^k_f
= C^k(X) \oplus C^{k+1}(Y)$, and 
%define the coboundary map
let $d_f(a,b)=(d_X(a)+f^*(b), -d_Y(b))$. The cohomology of $d_f$ fits into
the same exact sequence as $H^k_Q(X,Y)$, and hence is isomorphic to
$H^k_Q(X,Y)$. Indeed, the mapping cone construction works even when $f^*$ is
not injective at the level of cochains. 

The mapping cylinder and cone constructions are extremely general. 
They are also cumbersome, and to the best of our knowledge 
have never been used in tiling theory. 
Indeed, many of 
the structures defined for tiling spaces, such as pattern-equivariant
cohomology \cite{K,KP}, rely on an identification of certain features of
a tiling $\T$ with sets of tilings in $X_\T$. These structures 
make no sense on a (topological) mapping cylinder. 
Fortunately, quotient cohomology does make sense, and 
provides an easy yet powerful tool for studying the topology of tiling
spaces.

\section{Topological and tiling examples}\label{examples}

\subsection{Basic topological examples}

\begin{example}
Let $Y$ be a CW complex with a distinguished $n$-cell $e^n$ that is
not on the boundary of any cell of higher dimension. Let $X$ be the same
complex, only with two copies of $e^n$ (call them $e^n_1$ and $e^n_2$), each
with the same boundary as $e^n$, and let $f$ be the map that identifies
$e^n_1$ and $e^n_2$. Then, working with cellular cohomology, 
$C_Q^k(X,Y)$ is trivial in all dimensions except
$k=n$, and $C_Q^n(X,Y)$ is generated by the duals $(e^n_i)'$ to $e^n_i$, 
with the relation $(e^n_1)' + (e^n_2)' = 0$, so $H_Q^k(X,Y) = \Z$ if $k=n$
and is zero otherwise. 

Slightly more generally, let $X$ be a CW complex and let $Y$ be the quotient
of $Y$ by the identification of two $n$-cells $e^n_{1,2}$ of $X$, whose 
boundaries have previously been identified. (The generalization is that we
make no assumptions about how higher-dimensional cells attach to $e^n_{1,2}$.)
Then, as before, $C_Q^k(X,Y) = \Z$ when $k=n$ and vanishes otherwise, so
$H_Q^k(X,Y) = \Z$ for $k=n$ and vanishes otherwise. 
Up to homotopy, identifying $e^n_{1,2}$ is the same thing as gluing in an
$(n+1)$-cell with boundary $e^n_1 - e^n_2$, in which case $f$ can be viewed
as an inclusion into a space $Y'$ that is homotopy equivalent to $Y$, and 
$H^k_Q(X,Y) = H^{k+1}(Y',X)$. 
\end{example}

Repeating the construction as needed, we can compute the quotient
cohomology of any two CW complexes $X$ and $Y$, where $Y$ is the quotient
of $X$ by identification of some cells. 

\begin{figure}[h]
\includegraphics[width=2.5in]{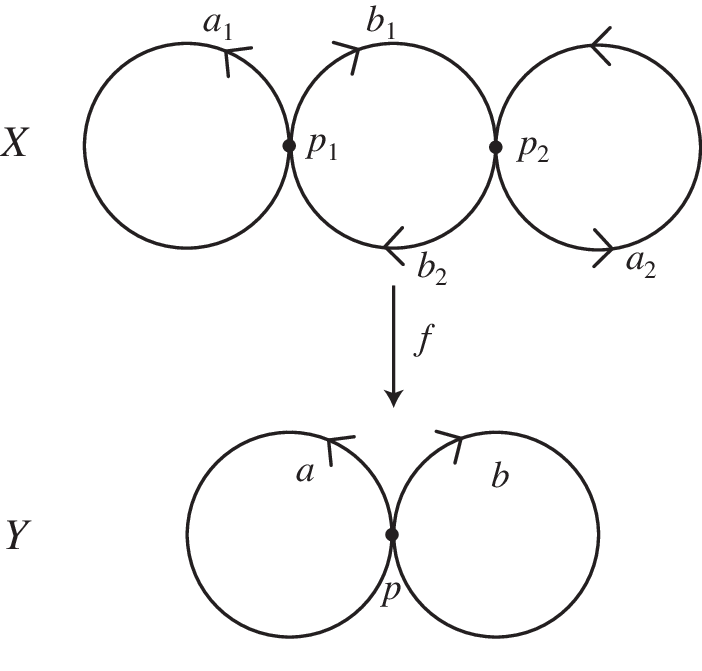}
\caption{A simple example of quotient cohomology.}\label{fig1}
\end{figure}

\begin{example}\label{figure8}
Figure \ref{fig1} shows two graphs, with $X$ the double cover of $Y$. 
Let $f$ be the covering map, 
sending each edge $a_i$ to $a$, each $b_i$ to $b$, and each vertex $p_i$ to $p$.
%The cohomology of $Y$ is generated by $p'$ in dimension 0 and $a'$ and $b'$ 
%in dimension 1, where the prime indicates the dual cochain. The cohomology
%of $X$ is generated by $p_1'+p_2'$ in dimension 0 and $a_1'$, $a_2'$ and
%$b_1'$ in dimension 1, with $b_2'$ cohomologous to $b_1'$. 
Since 
$f^*(p')=p_1'+p_2'$, $f^*(a')=a_1'+a_2'$ and $f^*(b')=b_1'+b_2'$, 
$C_Q^0(X,Y)=\Z$ is
generated by $p_1'$, with $p_2'=-p_1'$, while $C_Q^1(X,Y)=\Z^2$ is generated by 
$a_1'$ and $b_1'$, with $a_2'=-a_1'$ and $b_2'=-b_1'$. The coboundary of 
$p_1'$ is $b_2'-b_1'=-2b_1'$, so $H_Q^0(X,Y)=0$ and $H_Q^1(X,Y)=\Z \oplus \Z_2$, 
with generators $a_1'$ and $b_1'$. Our long exact sequence (\ref{LES1}) is then
\begin{equation}\label{fig8}
0 \to \Z \xrightarrow{f^*} \Z \to 0 \to \Z^2 \xrightarrow{f^*} \Z^3 \to
\Z \oplus \Z_2 \to 0.
\end{equation}
Torsion appears in $H^1_Q(X,Y)$, reflecting the fact that $f^*(b')$ is 
cohomologous to $2b_1' \in H^1(X)$. 
\end{example}

%\newpage

\subsection{One-dimensional tiling examples}\label{1D}

\begin{figure}[h]
\includegraphics[width=3in]{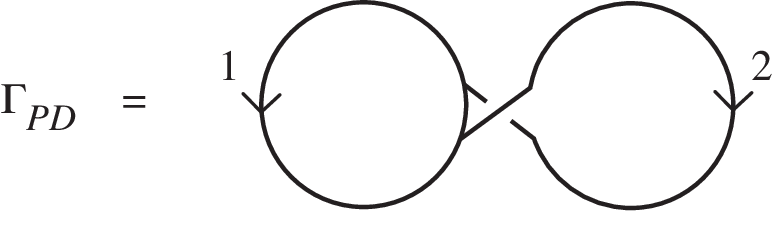}
\caption{The approximant for the period-doubling substitution tiling}
\label{fig2}
\end{figure}

\begin{example}[Period Doubling over the 2-Solenoid]
The period doubling substitution is $1 \to 21$, $2 \to 11$. (By this we mean that there are two prototiles, each of the same length, one labeled 1 and the other 2. The substitution inflates each prototile by a factor of two, tiling the inflated 1 with a 2 and a 1, and the inflated 2 by two 1's.) Since this is
a substitution of constant length 2, there is a natural map from
the period doubing
tiling space $\Omega_{PD}$ to the dyadic solenoid $S_2$. 
$\Omega_{PD}$ can be written as the inverse limit via substitution of the
approximant $\Gamma_{PD}$ shown in Figure \ref{fig2}, where the long 
edges 1 and 2 represent tile types and the short edges represent possible
transitions \cite{BD}. 
$\Gamma_{PD}$ is homotopically a figure 8, and maps to a circle by identifying
the two long edges and identifying the three short edges.
This projection of $\Gamma_{PD}$ to the circle intertwines the 
substitution on $\Gamma_{PD}$ and the doubling map on $S^1$, and 
has quotient cohomology
$H^1_Q(\Gamma_{PD}, S^1)=\Z$ (and $H^0_Q=0$).
The dyadic solenoid $S_2$ is the inverse limit of a circle under doubling,
and $H^k_Q(\Omega_{PD},S_2)$ is the direct limit of $H^k_Q(\Gamma_{PD},S^1)$
under substitution.
Substitution acts 
on $H^1_Q(\Gamma_{PD}, S^1)$ by multiplication
by $-1$,  so $H^1_Q(\Omega_{PD}, S_2) = \dir H^1_Q(\Gamma_{PD},S^1)=\Z$. 
\end{example}

\begin{example}
The Thue-Morse substitution tiling of the real line is well known to be the
double cover of the period-doubling tiling. Here we explore the
quotient cohomology of the pair. 

The Thue-Morse substitution is $A \to AB$, $B \to BA$. We can rewrite this
in terms of collared tiles, distinguishing between $A$ tiles that are followed
by $B$ tiles (call these $A_1$) and $A$ tiles that are followed by $A$ tiles
(call these $A_2$). Likewise, $B$ tiles that are followed by $A$ tiles are
called $B_1$ and $B$ tiles that are followed by $B$ tiles are $B_2$. In terms
of these collared tiles, the substitution is:
\begin{equation}\label{TMcollarsub} A_1 \to A_1B_2; \qquad A_2 \to A_1 B_1; \qquad
B_1 \to B_1A_2; \qquad B_2 \to B_1A_1. 
\end{equation}
The map from the Thue-Morse substitution space to the period-doubling space
just replaces each $A_1$ or $B_1$ tile with a 1 and each $A_2$ or $B_2$ with
a 2. This is exactly 2:1, and the preimage of any period-doubling tiling 
consists of a Thue-Morse tiling, plus a second tiling obtained by swapping 
$A_i \leftrightarrow B_i$ at each place.  

\begin{figure}[h]
\includegraphics[width=2.5in]{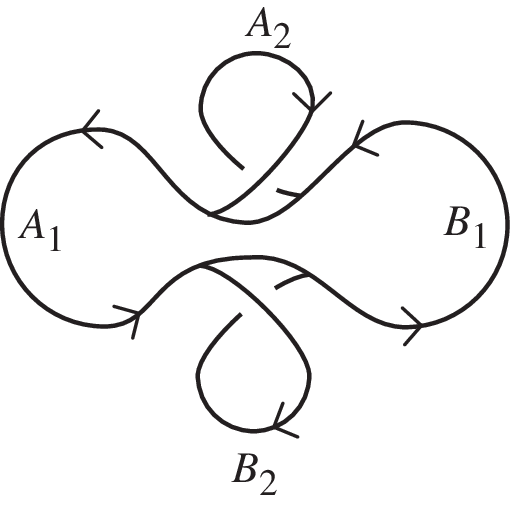}
\caption{The approximant $\Gamma_{TM}$ for the Thue-Morse substitution tiling space.}\label{fig3}
\end{figure}

\begin{figure}[h]
\includegraphics[width=2.5in]{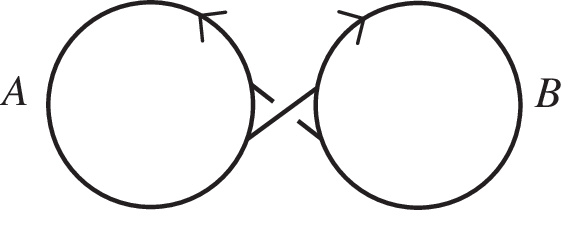}
\caption{Another approximant $\Gamma_{TM'}$ 
for the Thue-Morse tiling space.}\label{fig4} 
\end{figure}

Using collared tiles, we obtain the Thue-Morse tiling space 
$\Omega_{TM}$ as the inverse limit, under the 
substitution (\ref{TMcollarsub}), 
of the approximant $\Gamma_{TM}$ shown in 
Figure \ref{fig3}.  
\footnote{The space $\Omega_{TM}$ is more frequently computed as 
the inverse limit
of the simpler approximant $\Gamma_{TM'}$, 
shown in Figure \ref{fig4}.}
$\Gamma_{TM}$ is homotopy equivalent to the 
double cover of a figure 8, just as $\Gamma_{PD}$
is equivalent to a figure 8. Indeed, 
the quotient map from $\Gamma_{TM}$ to $\Gamma_{PD}$ is, 
up to homotopy, the 
covering map of the figure 8 that we studied in Example \ref{figure8}, with
$H^1_Q(\Gamma_{TM}, \Gamma_{PD}) = \Z \oplus \Z_2$, 
with $A_1'$ (or $B_1'$) generating
the $\Z_2$ factor and $A_2'$ (or $B_2'$) generating the $\Z$ factor.

Under substitution, $A_1'+A_2'$ pulls back to $A_1'+B_1'+A_2'+B_2'=0$,
while $A_1'$ pulls back to $A_1'+A_2'+B_2'=A_1'$, and 
$H_Q^1(\Omega_{TM},\Omega_{PD})= \dir H_Q^1(\Gamma_{TM}, \Gamma_{PD})=\Z_2$.  
The groups $H^1(\Omega_{TM})$
and $H^1(\Omega_{PD})$ are both isomorphic to $\Z[{\textstyle \half}] \oplus \Z$ and
the exact sequence (\ref{LES1}) applied to $\Omega_{TM}$ and
$\Omega_{PD}$ is
\begin{equation} 
0 \to \Z \xrightarrow{f^*} \Z \to 0 \to \Z[{\textstyle \half}] \oplus \Z \xrightarrow{f^*}
\Z[{\textstyle \half}] \oplus \Z \to \Z_2 \to 0.
\end{equation}
Although $H^1(\Omega_{TM})$ and $H^1(\Omega_{PD})$ are 
isomorphic as abstract groups, the pullback map $f^*$ is not an 
isomorphism. Rather, it is the identity on $\Z[{\textstyle \half}]$ and 
multiplication by 2 on $\Z$. 
\end{example}

The remaining one dimensional examples may seem trivial or 
contrived, but they are the building
blocks for understanding the 2-dimensional examples that follow.

\begin{example}[Degenerations A and B.]
If $X = S_2 \times \{1,2\}$ is 2 copies of a dyadic solenoid and $Y=S_2$ 
is a single copy, and if $f$ is projection onto the first factor,
then $H^1_Q(X,Y)= H^1(S_2)=\Z[{\textstyle \half}]$ and $H^0_Q(X,Y)=H^0(S_2)=\Z$. We call this
degeneration A. 
Degeneration B is where $X = \Omega_{PD} \times \{1,2\}$ projects to 
$Y=\Omega_{PD}$, in which case  
$H^1_Q(X,Y)= H^1(\Omega_{PD}) = \Z[{\textstyle \half}]\oplus \Z$ and 
$H^1_Q(X,Y)=H^0(\Omega_{PD})=\Z$. 
\end{example}

\begin{example}[Degeneration C.]  The space $\Gamma_{TM'}$ of Figure
\ref{fig4} also serves as an approximant for another tiling space of 
interest using a different substitution map. Let $X$ be the inverse limit of the
$\Gamma_{TM'}$ under a map that wraps each large circle twice around itself, and
that doubles the length of the small intervals that link the circles.
That is, the interval that goes from the left circle to the right one
turns into a piece of the left circle followed by the interval, followed
by a piece of the right circle.  Note that the small loop obtained from
the four small intervals is homologically invariant under this map. 

Let $Y=S_2$ be the dyadic solenoid, viewed as the inverse limit of a
circle under doubling. The obvious map from $\Gamma_{TM'}$ to $S^1$ has
$H^1_Q(\Gamma_{TM'} ,S^1)=\Z \oplus \Z$ and $H^0_Q(\Gamma_{TM'}, S^1)=0$. 
Substitution multiplies the
first factor in $H^1_Q$ by 2 and the second factor by 1, so $H^1_Q(X,Y)=
\dir H^1_Q(\Gamma_{TM'}, S^1) = \Z[{\textstyle \half}]
\oplus \Z$, while $H^0_Q(X,Y)=0$.
\end{example}

\section{Tools}\label{tools}

Suppose that $f:X\to\ Y$ and $g:Y\to Z$ are quotient maps that induce
injections on cochains. Then $h:=g\circ f:X\to Z$ is also such a map
and there is then a short exact sequence of the corresponding chain
complexes
\begin{equation} 0 \to C_Q^*(Y,Z) \to C_Q^*(X,Z) \to C_Q^*(X,Y) \to 0
\end{equation}
which induces the {\em long exact sequence for the triple}
\begin{equation}\label{LEST} \cdots \to H^k_Q(Y,Z) \to H^k_Q(X,Z) 
\to H^k_Q(X,Y)\to H^{k+1}_Q(Y,Z)
\to \cdots.
\end{equation}

\begin{theorem}(Excision) Suppose that $f: X \to Y$ is a quotient map
  that induces an injection on cochains. Suppose that $Z \subset X$ is
  an open set such that $f|_{\bar Z}$ is a homeomorphism onto its
  image. Then the inclusion induced homomorphism from $H^*_Q(X,Y)$ to
  $H^*_Q(X\setminus Z, Y\setminus f(Z))$ is an isomorphism.
\end{theorem}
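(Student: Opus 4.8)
The plan is to exploit the definition $C_Q^k(X,Y)=C^k(X)/f^*C^k(Y)$ and reduce the statement to the behavior of ordinary cochains under the excision of $Z$. The key observation is that because $f|_{\bar Z}$ is a homeomorphism onto its image, the factor map $f$ is ``trivial'' over $Z$: the identification that $f$ performs happens only on $X\setminus Z$, so removing $Z$ (and its image) should not change the quotient cochain complex at all. I would make this precise by examining the short exact sequence of cochain complexes $0\to C^*(Y)\xrightarrow{f^*}C^*(X)\to C_Q^*(X,Y)\to 0$ and its counterpart for $f|:X\setminus Z\to Y\setminus f(Z)$, then comparing them via the inclusion $j:X\setminus Z\hookrightarrow X$.

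First I would set up the restriction map $j^\sharp:C_Q^*(X,Y)\to C_Q^*(X\setminus Z,\,Y\setminus f(Z))$ at the cochain level, checking that it is well defined (i.e.\ that restricting a cochain in $f^*C^*(Y)$ lands in $f^*C^*(Y\setminus f(Z))$) and that it commutes with the coboundary, so it descends to the claimed homomorphism on $H_Q^*$. Next, and this is the crux, I would argue that $j^\sharp$ is in fact an \emph{isomorphism of cochain complexes}, from which the isomorphism on cohomology follows immediately. The point is that a class in $C_Q^k(X,Y)$ is a cochain on $X$ modulo pullbacks from $Y$; since $f$ restricted to $\bar Z$ is a homeomorphism, every cell (or PE pattern) in $Z$ is matched bijectively and unambiguously with a cell in $f(Z)\subset Y$, so modulo $f^*C^*(Y)$ the cochain data over $Z$ is forced and carries no information. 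Concretely, I would show that the cochains supported on $\bar Z$ in $C^*(X)$ coincide exactly with $f^*$ of the cochains supported on $f(\bar Z)$ in $C^*(Y)$, so they vanish in the quotient.

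Carrying this out, I would verify surjectivity of $j^\sharp$ by taking any quotient cochain on $X\setminus Z$, lifting it to an honest cochain on $X\setminus Z$, and extending by zero over $Z$ to get a cochain on $X$ representing a preimage class; injectivity follows because any quotient cochain on $X$ restricting to zero on $X\setminus Z$ is supported on $\bar Z$ and hence, by the homeomorphism hypothesis, lies in the image of $f^*$ and is therefore already zero in $C_Q^*(X,Y)$. In the \Cech or pattern-equivariant setting I would run the same argument using a cover (respectively, a pattern radius) fine enough that the relevant local pieces over $\bar Z$ and $f(\bar Z)$ are identified by $f$, so that the open condition on $Z$ and the homeomorphism on $\bar Z$ together guarantee the matching.

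The main obstacle I anticipate is the bookkeeping at the boundary of $Z$: because $Z$ is open but $f$ is only assumed a homeomorphism on the closure $\bar Z$, I must be careful that cochains partly supported on $\partial Z$ are handled consistently when I ``extend by zero'' and when I compare supports, so that $j^\sharp$ genuinely respects the coboundary and the two short exact sequences fit into a commuting ladder. The homeomorphism hypothesis on $\bar Z$ (rather than just $Z$) is exactly what is needed to control these boundary cells, and confirming that this suffices — rather than needing a collaring or a finer decomposition — is the step I would scrutinize most.
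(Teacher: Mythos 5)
Your central claim --- that the restriction $j^\sharp:C_Q^*(X,Y)\to C_Q^*(X\setminus Z,\,Y\setminus f(Z))$ is an isomorphism of cochain complexes --- is false, and the failure is exactly the boundary phenomenon you deferred to the end. The bad step is the inference ``a quotient cochain restricting to zero on $X\setminus Z$ is supported on $\bar Z$.'' In every cochain theory relevant here (singular, \Cech, pattern-equivariant), cochains evaluate on carriers (simplices, tuples of open sets, patches) that \emph{straddle} $\partial Z$, meeting both $Z$ and the region where $f$ is non-injective; such a cochain can vanish on every carrier inside $X\setminus Z$ without being determined by its behavior on $\bar Z$ and without being a pullback. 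Concretely, let $X=[0,3]$, $Y=[0,2]$, $f(t)=t$ for $t\le 2$ and $f(t)=4-t$ for $t\ge 2$ (a fold), and $Z=[0,\tfrac12)$; then $f|_{\bar Z}$ is a homeomorphism onto its image and $f^{-1}(f(Z))=Z$, so all hypotheses hold. Let $\sigma_1$ be the singular $1$-simplex travelling $\tfrac14\to 2\to 1$ (staying in $[\tfrac14,2]$) and $\sigma_2$ the one travelling $\tfrac14\to 2\to 3$; then $f\circ\sigma_1=f\circ\sigma_2$, and neither simplex is contained in $X\setminus Z$. The cochain $\beta$ with $\beta(\sigma_1)=1$ and $\beta=0$ on all other simplices vanishes on every simplex contained in $X\setminus Z$, so $j^\sharp[\beta]=0$; but $\beta\notin f^*C^1(Y)$, since any $\alpha$ with $f^*\alpha=\beta$ would need $\alpha(f\circ\sigma_1)=1$ and $\alpha(f\circ\sigma_1)=\alpha(f\circ\sigma_2)=0$. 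Thus $j^\sharp$ has nontrivial kernel at the cochain level. (In this toy example all the $H_Q$ groups vanish, so the theorem survives --- but your proof of it does not.) There is a second, independent hole: even a cochain genuinely supported on carriers inside $\bar Z$ need not be a pullback, because the hypothesis makes $f$ injective \emph{on} $\bar Z$ but not \emph{over} $f(\partial Z)$; points of $X\setminus\bar Z$ may map into $f(\partial Z)$ (in the example, $f(3)=f(1)$). Your picture is essentially the cellular one, where cells lie either in $\bar Z$ or in $X\setminus Z$ and are matched bijectively over $Z$; but tiling spaces are not CW complexes, the relevant theory is \Cech (or PE), and there every carrier touching $\partial Z$ straddles it.

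This is why the paper's proof never touches cochains. It first converts quotient cohomology into honest relative cohomology, $H_Q^k(X,Y)\simeq H^{k+1}(M_f,X\times\{0\})$, via the mapping cylinder and the five lemma; it then uses the hypothesis on $f|_{\bar Z}$ to show that the inclusions $X\times\{0\}\hookrightarrow X\times\{0\}\cup(\bar Z\times[0,1])$ (and the analogous inclusion on the excised side) are homotopy equivalences; and finally it applies ordinary excision inside $M_f$, removing $Z\times[0,1]$. Those homotopy equivalences are precisely the mechanism that absorbs the straddling carriers which defeat your cochain-level argument. To rescue your approach you would need at minimum a chain-homotopy or tautness/refinement argument (i.e.\ essentially a reproof of the excision axiom for the cochain theory in question), at which point the mapping-cylinder route is the shorter path.
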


\begin{proof}
  Inclusions of $X$ and $Y$ into $M_f$ (as $X\times\{0\}$ and
  $Y\times\{1\}$) induce a homomorphism from the long exact sequence
  for the pair $(X,Y)$ in the quotient cohomology to the usual long
  exact sequence for the pair $(M_f,X\times\{0\})$. The induced
  homomorphism from $H_Q^k(X,Y)$ to $H^{k+1}(M_f,X\times\{0\})$ is an
  isomorphism, by the five lemma.  Since $f|_{\bar Z}$ is a
  homeomorphism onto its image, inclusion of $X\times\{0\}$ into
  $X\times\{0\}\cup(\bar{Z}\times [0,1])\subset M_f$ is a homotopy
  equivalence. This inclusion then induces an isomorphism from
  $H^{k+1}(M_f,X\times\{0\})$ onto
  $H^{k+1}(M_f,X\times\{0\}\cup(\bar{Z}\times [0,1]))$. Since
  $f|_{\partial Z}$ is a homeomorphism onto its image, inclusion of
  $(X\times\{0\})\setminus (Z\times\{0\})$ into
  $((X\times\{0\})\setminus (Z\times\{0\}))\cup (\partial Z\times
  [0,1])\subset M_f$ is a homotopy equivalence which then induces an
  isomorphism from $H^{k+1}(M_f\setminus(Z\times [
  0,1]),(X\times\{0\})\setminus (Z\times\{0\}))$ onto
  $H^{k+1}(M_f\setminus (Z\times [0,1]),((X\times\{0\})\setminus
  (Z\times\{0\}))\cup (\partial Z\times [0,1]))$. By ordinary
  excision, the inclusion of $(M_f\setminus(Z\times [0,1]),((X\times
  \{0\}\ ) \setminus(Z\times\{0\}))\cup (\partial Z\times [0,1]))$
  into $(M_f,(X\times\{0\} )\cup(\bar{Z}\times [0,1]))$ induces an
  isomorphism from $H^{k+1}(M_f,X\times\{0\}\cup(\bar{Z}\times
  [0,1]))$ onto $H^{k+1}(M_f\setminus (Z\times
  [0,1]),((X\times\{0\})\setminus (Z\times\{0\}))\cup (\partial
  Z\times [0,1]))\simeq H^{k+1}(M_f\setminus (Z\times [0,1]),(X\times
  \{0\}\ ) \setminus(Z\times\{0\}))$. The latter group is just
  $H^{k+1}(M_{f|_{X\setminus Z}},(X\setminus{Z})\times\{0\})$, which
  is (inclusion induced) isomorphic with $H^k_Q(X\setminus Z,
  Y\setminus f(Z))$.
 
%I'll need to flesh this out to get the details, but here's the idea. 
%$H_Q^k(X,Y)$ is isomorphic to $H^{k+1}(M_f,X)$. Since $f$ is a homeomorphism
%on $\bar Z$, we can homotope $M_f$ to collapse $\bar Z \times [0,1]$ to a single
%copy of $\bar Z$. Then apply ordinary excision to remove $Z$ from both
%$M_f$ and $X$. Re-expand the boundary of $Z$, so that we have 
%$H^{k+1}(M_f - Z \times [0,1], X-Z)$, which is isomorphic to 
%$H^k_Q(X-Z, Y-f(Z))$. 
\end{proof}
 
\begin{theorem}(Mayer-Vietoris Sequence)\label{Mayer-Vietoris
    Sequence} Suppose that $X_1$ and $X_2$ are subspaces of $X$ with
  $X$ the union of the interiors of $X_1$ and $X_2$. Suppose further
  that $f:X\to Y$, $f|_{X_1}$, $f|_{X_2}$, and $f|_{X_1\cap X_2}$ are
  all quotient maps onto $Y$ that induce injections on cochains. There
  is then a long exact sequence

%\begin{equation}
\begin{eqnarray}
\label{MVS} \cdots \to H^k_Q(X,Y)& \to & H^k_Q(X_1,Y)\oplus 
  H^k_Q(X_2,Y) \cr
  &\to& H^k_Q(X_1\cap X_2,Y) \to H^{k+1}(X,Y)
  \to \cdots
%\end{equation}
\end{eqnarray}
\end{theorem}

\begin{proof} This is just the relative Mayer-Vietoris sequence for
  the pairs $(M_{f_1},X_1)$ and $(M_{f_2},X_2)$, with $f_i:=f|_{X_i}$,
  together with the identifications $H^k_Q(X_i,Y)\simeq
  H^{k+1}(M_{f_i},X_i)$, etc.
\end{proof}
Given $f:X\to Y$, let $S^k_f(X):= X\times\mathbb{D}^k/\sim$, where
$\mathbb{D}^k$ is the closed $k$-disk and $(x,v)\sim(y,v)$ for
$v\in\partial \mathbb{D}^k$ if $f(x)=f(y)$. The {\em $k$-fold
  fiber-wise suspension of $f$} is the map $S^k(f): S^k_f(X)\to Y$ by
$S^k(f)([(x,v)]):=f(x)$.

\begin{theorem}(Cohomology of Suspension)\label{Cohomology.of.Suspension} 
Suppose that $f:X\to Y$ is a quotient map that induces
  an injection on cochains. Then $H^{n+k}_Q(S^k_f(X),Y)\simeq
  H^n_Q(X,Y)$ for all $n$ and all $k\ge 0$.
\end{theorem}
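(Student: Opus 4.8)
The plan is to prove this by induction on $k$, reducing the $k$-fold suspension to an iterated single suspension and establishing the base case $k=1$ directly. First I would observe that $S^k_f(X)$ is built by attaching disk-bundle data fiberwise, and that there should be a homeomorphism (or at least a quotient-cohomology isomorphism) $S^k_f(X) \cong S^1_{g}(X')$ where $g = S^{k-1}(f): S^{k-1}_f(X) \to Y$ and $X' = S^{k-1}_f(X)$; writing $\mathbb{D}^k \cong \mathbb{D}^{k-1}\times\mathbb{D}^1$ and tracking the identification $(x,v)\sim(y,v)$ on the boundary should let me iterate. Granting this, the theorem follows from the single case $H^{n+1}_Q(S^1_f(X),Y) \simeq H^n_Q(X,Y)$ applied $k$ times, so the real content is the case $k=1$.

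For the base case, my approach is to use the mapping-cylinder identification $H^j_Q(W,Y) \simeq H^{j+1}(M_{S^1(f)|_{\cdot}}, \cdot)$ established in the discussion preceding Theorem~\ref{Cohomology.of.Suspension}, combined with the Mayer--Vietoris sequence of Theorem~\ref{Mayer-Vietoris Sequence}. Concretely, I would cover $S^1_f(X) = X\times\mathbb{D}^1/\!\sim$ (with $\mathbb{D}^1 = [-1,1]$ and endpoints $\pm 1$ glued via $f$) by two pieces $X_1$ and $X_2$, namely the images of $X\times[-1,1/2)$ and $X\times(-1/2,1]$, each enlarged slightly so their interiors cover. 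The key geometric facts are: each $X_i$ deformation-retracts onto a copy of $X$ (pushing the disk coordinate to an interior point), while the intersection $X_1\cap X_2$ has two components, one deformation-retracting onto $X$ (the ``middle'' slab) and one onto $Y$ (the ``boundary'' slab, where the $\sim$ identification has collapsed $X$ down to $f(X)=Y$). Feeding these identifications into the Mayer--Vietoris sequence \eqref{MVS}, the $H^j_Q(X_i,Y)$ terms become copies of $H^j_Q(X,Y)$, and the $H^j_Q(X_1\cap X_2,Y)$ term splits as $H^j_Q(X,Y)\oplus H^j_Q(Y,Y)$, with $H^j_Q(Y,Y)=0$ since the identity map has trivial quotient cohomology.

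The main obstacle, and the step I would spend the most care on, is identifying the connecting maps in the Mayer--Vietoris sequence precisely enough to read off the dimension shift. The two copies of $H^j_Q(X,Y)$ coming from $X_1$ and $X_2$ both restrict to the same copy of $H^j_Q(X,Y)$ in the intersection via (homotopically) the identity, so their difference map $H^j_Q(X_1,Y)\oplus H^j_Q(X_2,Y)\to H^j_Q(X_1\cap X_2,Y)$ is surjective onto that summand with kernel the diagonal $\cong H^j_Q(X,Y)$. This forces $H^j_Q(S^1_f(X),Y)$ to be isomorphic, via the connecting homomorphism, to $H^{j-1}_Q(X,Y)$, giving exactly the shift $H^{n+1}_Q(S^1_f(X),Y)\simeq H^n_Q(X,Y)$. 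I would verify the low-degree edge of the sequence separately to rule out spurious contributions in degree $0$, and I would double-check that all the restricted maps $f|_{X_i}$, $f|_{X_1\cap X_2}$ genuinely satisfy the quotient-and-injective-on-cochains hypotheses required by Theorem~\ref{Mayer-Vietoris Sequence}, since the collapsing on the boundary slab is where injectivity on cochains is most delicate.
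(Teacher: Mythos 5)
Your reduction to the case $k=1$ (via $S^{j+1}_f(X)\cong S^1_{S^j(f)}(S^j_f(X))$) matches the paper, but your base case contains a genuine error in the identification of the homotopy types of the pieces, and this error is fatal to the argument. First, note that in $S^1_f(X)$ the two boundary fibers are collapsed \emph{separately}: $(x,v)\sim(y,v)$ requires the same $v\in\partial\mathbb{D}^1$, so $X\times\{-1\}$ and $X\times\{+1\}$ become two disjoint copies of $Y$; the endpoints are not ``glued via $f$'' to each other. Consequently each of your pieces, say the image of $X\times[-1,1/2)$, is a mapping cylinder of $f$, and it deformation retracts onto $Y$ (the collapsed end), \emph{not} onto $X$: the retraction you propose, pushing the disk coordinate to an interior point, does not descend to the quotient, since distinct points $(x,-1)\sim(y,-1)$ with $f(x)=f(y)$ would have to be ``un-collapsed''. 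For the same reason the intersection of the two pieces is a single slab homeomorphic to $X\times(-1/2,1/2)$ --- the collapsed fibers lie in only one piece each --- so there is no second ``boundary'' component. The correct identifications are therefore $H^*_Q(X_i,Y)=0$ for both pieces (since $f|_{X_i}$ is a homotopy equivalence, the long exact sequence (\ref{LES1}) kills the quotient cohomology) and $H^*_Q(X_1\cap X_2,Y)\simeq H^*_Q(X,Y)$. This is exactly the paper's proof: with the flanking terms zero, the Mayer--Vietoris connecting homomorphism
\begin{equation*}
H^j_Q(X_1\cap X_2,Y)\longrightarrow H^{j+1}_Q(S^1_f(X),Y)
\end{equation*}
is forced to be an isomorphism, and that is precisely where the degree shift comes from.

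Your version is not merely imprecise; it is internally inconsistent. If, as you claim, both pieces had $H^j_Q(X_i,Y)\simeq H^j_Q(X,Y)$ and the difference map onto the intersection were surjective with kernel the diagonal, then the connecting map would be zero and exactness would give $H^j_Q(S^1_f(X),Y)\simeq H^j_Q(X,Y)$ --- no shift at all, contradicting the theorem you are trying to prove. So the conclusion you draw (``the connecting homomorphism gives the shift'') does not follow from your own setup; it only follows once one recognizes the pieces as mapping cylinders homotopy equivalent to $Y$ with vanishing quotient cohomology.
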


\begin{proof}
  As $S^{j+1}_f(X)$ is homeomorphic with $S^1_{S^j(f)}(X)$, it
  suffices to prove the theorem with $k=1$. Let
  $X_{-1}:=X\times[-1,1/2]/\sim$ and $X_1:=X\times[1/2,1]/\sim$. Then
  $f|_{X_i}$ is a homotopy equivalence, so $H^*_Q(X_i,Y)={0}$ for
  $i=\pm 1$. Clearly, $H^*_Q(X_1\cap X_{-1},Y) \simeq H^*_Q(X,Y)$. The
  Mayer-Vietoris sequence gives the result.
\end{proof}

If $X$ is an $n$-dimensional tiling space, $X_1$ is a closed subset of
$X$, and $\Gamma$ is a $k$-dimensional subspace of $\R^n$, we will say
that $X_1$ is a {\em $k$-dimensional tiling subspace of} $X$ {\em in
  the direction of } $\Gamma$ provided if $T\in X_1$ then $T-v\in X_1$
if and only if $v\in \Gamma$. If $X_1$ is a $k$-dimensional tiling
subspace of $X$ in the direction of $\Gamma$ and $\sim$ is an
equivalence relation on $X_1$, we will say that $\sim$ is {\em
  uniformly asymptotic} provided for each $\epsilon >0$ there is an
$R$ so that if $T,T'\in X_1$ and $T\sim T'$, then
$d(T-v,T'-v)<\epsilon$ for all $v\in \Gamma^{\perp}$ with $|v|\ge R$.

\begin{prop}\label{Cohomology.of.suspension.applied}
  Suppose that $X$ is a non-periodic $n$-dimensional tiling space and
  $f:X\to Y$ is an $\mathbb{R}^n$-equivariant quotient map that
  induces an injection on cochains. Suppose also that $X'$ is a
  $k$-dimensional tiling subspace of $X$ in the direction of $\Gamma$
  and let $Y'=f(X')$. Let $\sim$ be the relation on $X'$ defined by
  $T\sim T'$ if and only if $f(T)=f(T')$: assume that $\sim$ is
  uniformly asymptotic. In addition, assume that $f$ is one-to-one off
  $X'-\mathbb{R}^n:=\{T-v:T\in X',v\in\mathbb{R}^n\}$ and that if
  $T,T'\in X'$ and $v\in\mathbb{R}^n$ are such that $f(T'-v)=f(T)$,
  then $v\in\Gamma$. Then $H^m_Q(X,Y)\simeq H^{m-n+k}_Q(X',Y')$.
\end{prop}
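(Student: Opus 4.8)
The plan is to reduce the statement to the Cohomology of Suspension theorem (Theorem \ref{Cohomology.of.Suspension}) by showing that $X$ and $Y$ differ, up to the relevant cohomological data, precisely by an $(n-k)$-fold fiber-wise suspension of the restricted map $f|_{X'}: X' \to Y'$. The key observation is that the hypotheses are engineered so that $f$ is injective everywhere except on the orbit $X' - \R^n$, and on that orbit the identifications made by $f$ are governed entirely by the equivalence relation $\sim$ on $X'$ together with translation in the $\Gamma^\perp$ directions. Since $\Gamma$ is $k$-dimensional and we translate in $\R^n$, the ``extra'' directions transverse to $\Gamma$ form an $(n-k)$-dimensional family, which is what will supply the dimension shift $m \mapsto m - (n-k) = m - n + k$.

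First I would make precise the local structure of $f$ near $X' - \R^n$. Using the uniform asymptotic hypothesis, I would argue that a neighborhood of $X' - \R^n$ in $X$ can be described, up to the data seen by quotient cohomology, as $X' \times \R^{n-k}$ (the $\R^{n-k}$ being the $\Gamma^\perp$ translation directions), with $f$ collapsing the fiber over each point of $Y'$ in a way that, far out in the $\Gamma^\perp$ directions, becomes the trivial relation $T \sim T'$. The condition that $f(T'-v)=f(T)$ forces $v \in \Gamma$ is exactly what guarantees that the collapsing happens only ``within'' the tiling-subspace direction and not across transverse translates, so that the mapping cylinder $M_f$ restricted to this neighborhood looks like the fiber-wise suspension construction $S^{n-k}_{f|_{X'}}(X')$ rather than something more tangled.

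Next I would deploy the tools assembled earlier. Because $f$ is one-to-one off $X' - \R^n$, Excision (the first theorem) lets me discard the part of $X$ where $f$ is injective: there $C^*_Q$ vanishes, so $H^*_Q(X,Y) \simeq H^*_Q(N, f(N))$ for a suitable closed neighborhood $N$ of $X'-\R^n$ on which $f$ restricts to a homeomorphism off the orbit. Then I would identify the pair $(N, f(N))$ with the $(n-k)$-fold fiber-wise suspension of $f|_{X'}$, invoking the uniform asymptotic condition to see that the suspension coordinates can be taken along $\Gamma^\perp$ and that the equivalence on the boundary of the disk matches $\sim$. Applying Theorem \ref{Cohomology.of.Suspension} with $k$ replaced by $n-k$ then gives $H^m_Q(S^{n-k}_{f|_{X'}}(X'), Y') \simeq H^{m-(n-k)}_Q(X', Y') = H^{m-n+k}_Q(X',Y')$, which is the desired conclusion.

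The main obstacle, I expect, is the geometric identification in the second and third steps: showing rigorously that the neighborhood of the non-injective locus is homeomorphic (respecting $f$) to a fiber-wise suspension. The difficulty is that $X' - \R^n$ need not be a nice product neighborhood on the nose; the uniform asymptotic hypothesis only controls the behavior of $\sim$-related tilings at large $|v|$ in $\Gamma^\perp$, so one must produce an explicit deformation retraction or homotopy equivalence realizing the suspension structure, and verify it is compatible with $f$ and hence descends to quotient cochains. Carefully using the hypothesis that $f(T'-v)=f(T)$ implies $v\in\Gamma$ to rule out unwanted identifications across transverse directions is what makes this work, and assembling these into a clean homeomorphism of pairs $(N, f(N)) \cong (S^{n-k}_{f|_{X'}}(X'), Y')$ is where the real labor lies.
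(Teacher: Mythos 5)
Your overall skeleton --- excise the locus where $f$ is injective, identify what remains with a fiber-wise suspension of $f|_{X'}$, and invoke Theorem \ref{Cohomology.of.Suspension} --- matches the endgame of the paper's proof, but there is a genuine gap at the identification step, and it is not a matter of labor: the identification you want is false inside $X$. If $T\sim T'$ are distinct $\sim$-related tilings in $X'$, then for every transverse $v\in\Gamma^{\perp}$ the translates $T-v$ and $T'-v$ remain \emph{distinct} points of $X$; uniform asymptoticity makes them $\eps$-close for large $|v|$, but never equal. Consequently the natural core of your neighborhood, $X'-\mathbb{D}^{n-k}=\{T-v: T\in X', v\in\mathbb{D}^{n-k}\subset\Gamma^{\perp}\}$, is homeomorphic to the product $X'\times\mathbb{D}^{n-k}$ (the map $(T,v)\mapsto T-v$ is a continuous bijection from a compact space; injectivity uses the defining property of a tiling subspace), with \emph{no} identifications over the boundary sphere, and likewise $f(X'-\mathbb{D}^{n-k})\cong Y'\times\mathbb{D}^{n-k}$. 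A pair of this product form has quotient cohomology $H^m_Q(X',Y')$ with no degree shift, whereas the suspension pair $(S^{n-k}_{f|_{X'}}(X'),Y')$ has the shifted groups $H^{m-n+k}_Q(X',Y')$. So no $f$-compatible homeomorphism, or even homotopy equivalence of pairs, of the kind you propose can exist: the boundary-sphere collapse that creates the suspension --- and with it the dimension shift --- happens in $Y$, but never in any subspace of $X$.

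The missing idea, which is the heart of the paper's proof and the place where uniform asymptoticity is actually used, is to replace $X$ by quotients. For $r\ge 0$ define $T_1\sim_r T_2$ iff $f(T_1)=f(T_2)$ and either $T_1=T_2$ or $T_1-v,\,T_2-v\in X'$ for some $v\in\Gamma^{\perp}$ with $|v|\ge r$, and set $X_r:=X/\sim_r$. Uniform asymptoticity guarantees that points identified in $X_r$ are within $\eps(r)\to 0$ of each other, whence $X\simeq \inv X_r$ and, by continuity of \Cech cohomology under inverse limits, $H^*_Q(X,Y)\simeq \dir H^*_Q(X_r,Y)$; since the bonding maps are homotopy equivalences for $r>0$, this equals $H^*_Q(X_1,Y)$. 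In $X_1$ the far-transverse identifications \emph{have} been made, so excision (applied to $Z=f_1^{-1}\bigl(Y\setminus f(X'-\mathbb{D}^{n-k})\bigr)$, on whose closure $f_1$ is injective) leaves exactly the suspension $S^{n-k}_{f|_{X'}}(X')$ over $Y\setminus f_1(Z)$, and the hypothesis that $f(T'-v)=f(T)$ with $T,T'\in X'$ forces $v\in\Gamma$ is what makes $Y'$ a deformation retract of $Y\setminus f_1(Z)$. Your proposal invokes uniform asymptoticity and that last hypothesis only as heuristics for a product-like neighborhood structure; without the $X_r$ approximation step the argument cannot be completed.
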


\begin{proof}
  For $r\ge 0$, let $\sim_r$ be defined on $X$ by $T_1\sim_r T_2$ if
  and only if $f(T_1)=f(T_2)$ and either $T_1=T_2$ or there is
  $v\in\Gamma^{\perp}$, with $|v|\ge r$, so that $T_1-v$ and $T_2-v$
  are in $X'$. Then $\sim_r$ is a closed equivalence relation. Let
  $X_r:=X/\sim_r$ and, for $r_1\le r_2$, let $p_{r_2,r_1}:X_{r_2}\to
  X_{r_1}$ be the natural quotient map. Then $X\simeq \inv
  p_{r_2,r_1}$ and $H^*_Q(X,Y)\simeq \dir p^*_{r_2,r_1}$. Moreover,
  $p_{r_2,r_1}$ is a homotopy equivalence for $r_2\ge r_1>0$ so
  $H^*_Q(X,Y)\simeq H^*_Q(X_1,Y)$, where $f_1:X_1\to Y$ is given by
  $f_1([T]):=f(T)$. Let $Z:= f_1^{-1}(Y\setminus
  f(X'-\mathbb{D}^{n-k}))$. Then $f_1$ is one-to-one on $\bar{Z}$ and
  $H^*_Q(X_1,Y)\simeq H^*_Q(X_1\setminus Z,Y\setminus f_1(Z))$ by
  excision. Now $X_1\setminus Z\simeq S^{n-k}_{f|_{X'}} (X')$ and $Y'$
  is a deformation retract of $Y\setminus f_1(Z)$ (the latter follows
  from the hypothesis that if $T,T'\in X'$ and $v\in\mathbb{R}^n$ are
  such that $f(T'-v)=f(T)$, then $v\in\Gamma$). Thus
  $H^*_Q(X_1\setminus Z, Y\setminus f_1(Z))\simeq
  H^*_Q(S^{n-k}_{f|_{X'}}(X'),Y')$ and the proposition follows from
  Theorem \ref{Cohomology.of.Suspension}.
\end{proof}

%We now have the tools for analyzing tiling spaces. 

%\begin{theorem}
%Suppose that $f: X \to Y$ is a factor map of $n$-dimensional tiling 
%spaces, and suppose that $Y' \subset Y$ is such that $f$ is a homeomorphism
%from $X-X'$ to $Y-Y'$, where $X'=f^{-1}(Y')$. Then  
%$H_Q(X,Y) \simeq H_Q(X', Y')$. 
%\end{theorem}

%\begin{proof} Work on (Gahler) approximants, use ordinary excision, and 
%take a limit.  
%\end{proof}

%\begin{cor} Suppose that $f: X\to Y$ is an almost 1:1 
%factor map of $n$-dimensional tiling spaces, as before, and that $X' =
%\R^m \times X''$ and $Y' = \R^m \times Y''$, where $X''$ and $Y''$ are
%$n-m$ dimensional tiling spaces or solenoids. Then $H_Q^k(X,Y) \simeq
%H_Q^{k-m}(X'', Y'')$. 
%\end{cor}

%\begin{proof}
%Suspending things just raises the dimension by 1. Repeat $m$ times. 
%\end{proof}

\begin{example}
The map of the period-doubling substitution space $\Omega_{PD}$
to the 2-solenoid $S_2$ fits into the framework of Proposition
\ref{Cohomology.of.suspension.applied}. The map is 1:1 except
on two doubly asymptotic $\R$-orbits 
that are identified. That is, $n=1$, $k=0$, 
$X'$ is a two-point set, and $Y'$ is a single point, so
%$m=1$, $X''$ is a two-point set, and $Y''$ is a single point, so
$H^1_Q(\Omega_{PD},S_2)=\Z$, as computed earlier. 

Likewise, the map from the (two-dimensional) half-hex tiling space
$\Omega_{hh}$ to the two-dimensional dyadic solenoid $S_2 \times S_2$
is 1:1 except on three $\R^2$-orbits.
In this case $n=2$, $k=0$, $X'$ is a three-point set, 
and $Y'$ is a single point,
so $H^2_Q(\Omega_{hh},S_2\times S_2)=\Z^2$, while $H^1_Q=H^0_Q=0$.
\end{example}
%$m=2$, $X'''$ is a three-point set, and $Y''$ is a single point, so
%$H^2_Q(\Omega_{hh},S_2\times S_2)=\Z^2$, while $H^1_Q=H^0_Q=0$.

%In the equithirds tiling (import pictures from my Exact Regularity
%paper, arXiv 1004.2281), there are three pair of $\R^2$-orbits %orbits
%each of which is identified by the map $f$ to the 2D 3-adic solenoid,
%so $n=2$, $k=0$, $X'$ is six points and $Y'$ is three points and
%$H^2_Q(\Omega_{eq3}, S_2\times S_2)=\Z^3$.
%$m=2$, $X''$ is six points and $Y''$ is three points and 
%$H^2_Q(\Omega_{eq3}, S_2\times S_2)=\Z^3$.

\section{Variations on the chair tiling}\label{chairs}

It frequently happens that one tiling space is a factor
of another, and that the factor map is almost-everywhere 1:1. 
For instance, the chair tiling space that has the 2-dimensional dyadic solenoid
as an almost-1:1 factor. 
In addition, Mozes \cite{Mozes} and 
Goodman-Strauss \cite{Chaim.annals}
have proven that every substitution tiling space in dimension 2 and higher,
meeting
some mild conditions, is an almost-1:1 factor of a tiling space obtained
from local matching rules. 

These examples do not fit directly into the
framework of Proposition \ref{Cohomology.of.suspension.applied}. However,
it is possible to expand the chair example to make it fit. The chair and
the dyadic solenoid belong to a family of nine tiling spaces, connected by
simpler factor maps such that 
Proposition \ref{Cohomology.of.suspension.applied}
applies to each such map.

\subsection{The nine models}

Each model comes from a substitution. The simplest of these
is the 2-dimensional 
dyadic solenoid, $S_2\times
S_2$, which we represent as the inverse limit of the substitution
$$ \neswboxxx 
\to \begin{matrix}\nwseboxxx  \neswboxxx \cr
\neswboxxx  \nwseboxxx \end{matrix}, \qquad 
\nwseboxxx \to \begin{matrix}\nwseboxxx  \neswboxxx \cr
\neswboxxx  \nwseboxxx \end{matrix}.
$$
The approximant associated with this substitution is the torus $T^2=\R^2/L$,
where $L$ is the lattice spanned by $(1,1)$ and $(1,-1)$. In other words, 
$T^2$ is an infinite checkerboard modulo translational symmetry. Substitution
acts by doubling in each
direction, and the 2-dimensional dyadic solenoid is the inverse limit of
this torus under substitution.

The most intricate model, which we label with subscripts $(X,+)$,
comes from the substitution
$$ \nwbox{w}{x}{y}{z} \to \begin{matrix} 
\nwbox{w}{1}{y}{1}  \swbox{0}{x}{y}{0}\cr
\nebox{w}{0}{0}{z}  \nwbox{1}{x}{1}{z} \end{matrix},\qquad
\nebox{w}{x}{y}{z} \to \begin{matrix} 
\sebox{w}{0}{y}{0}  \nebox{1}{x}{y}{1}\cr
\nebox{w}{1}{1}{z}  \nwbox{0}{x}{0}{z} \end{matrix}, 
$$ 
\begin{equation}\label{subX+}\swbox{w}{x}{y}{z} \to \begin{matrix} 
\sebox{w}{0}{y}{0}  \swbox{1}{x}{y}{1}\cr
\swbox{w}{1}{1}{z}  \nwbox{0}{x}{0}{z} \end{matrix}, \qquad
\sebox{w}{x}{y}{z} \to \begin{matrix} 
\sebox{w}{1}{y}{1}  \swbox{0}{x}{y}{0}\cr
\nebox{w}{0}{0}{z}  \sebox{1}{x}{1}{z} \end{matrix},
\end{equation}
where each label $w,x,y,z$ can be either 0 or 1, and the two labels
adjacent to the head of an arrow are required to be the same. 

The remaining models are derived from the rules (\ref{subX+})
by deleting some information, either about edge labels or about
which way the arrows are pointing. 
The first letter ($X$, $/$, or
$0$) indicates whether we keep track of all arrows, just those in the
northeast or southwest direction, or none of the arrows. The second 
letter ($+$, $-$, or $0$) indicates whether we label all the edges,
just the horizontal edges, or no edges at all.

Specifically,
\begin{enumerate}
\item The $(X,-)$ substitution is the same as $(X,+)$, only without
any labels on the vertical edges. This eliminates the requirement that
the two labels at the head of an arrow agree. 
\item The $(X,0)$ substitution is the same as $(X,+)$ or $(X,-)$, only
with no edge labels at all. This is a version of 
the well-known chair substitution.
\item The $(/,+)$ substitution is the same as $(X,+)$, only with the
arrows pointing northwest and southeast identified. 
Specifically, the substitution is now
$$ \nebox{w}{x}{y}{z} \to \begin{matrix} 
\nwsebox{w}{0}{y}{0} \nebox{1}{x}{y}{1}\cr
\nebox{w}{1}{1}{z} \nwsebox{0}{x}{0}{z} \end{matrix}, \qquad
\swbox{w}{x}{y}{z} \to \begin{matrix} 
\nwsebox{w}{0}{y}{0} \swbox{1}{x}{y}{1}\cr
\swbox{w}{1}{1}{z} \nwsebox{0}{x}{0}{z} \end{matrix},
$$ 
\begin{equation*}\label{sub/+}
\nwsebox{w}{x}{y}{z} \to \begin{matrix} 
\nwsebox{w}{1}{y}{1}  \swbox{0}{x}{y}{0}\cr
\nebox{w}{0}{0}{z}  \nwsebox{1}{x}{1}{z} \end{matrix}.
\end{equation*}
On an double-headed arrow, either $w=y$ or $x=z$, while on a single-headed
arrow the labels at the head of the arrow must agree.
\item The $(/,-)$ substitution is the same as $(/,+)$, only with no labels
on the vertical edges.
\item The $(/,0)$ substitution is the same as $(/,+)$, only with no labels
on any edges. 
\item The $(0,+)$ substitution is 
\begin{equation*}\label{sub0+}
\neswbox{w}{x}{y}{z} \to \begin{matrix} 
\nwsebox{w}{0}{y}{0} \neswbox{1}{x}{y}{1}\cr
\neswbox{w}{1}{1}{z} \nwsebox{0}{x}{0}{z} \end{matrix},
\qquad 
\nwsebox{w}{x}{y}{z} \to \begin{matrix} 
\nwsebox{w}{1}{y}{1}  \neswbox{0}{x}{y}{0}\cr
\neswbox{w}{0}{0}{z}  \nwsebox{1}{x}{1}{z} \end{matrix}.
\end{equation*}
%which is the same as $(/,+)$, only with northeast and southwest pointing
%arrows identified. 
On each tile, either the labels at one head of the arrow
must agree, or the labels on the other head must agree. 
\item The $(0,-)$ substitution is the same as $(0,+)$, only without
any labels on the vertical edges. 
\end{enumerate}

\begin{remark}
The $(X,+)$ model is closely related to Goodman-Strauss' 
Trilobite and Crab (T\&C) tilings \cite{Chaim.crab}. 
The T\&C tilings can be written using the tiles 
of the $(X,+)$ model, only with local matching rules instead of 
a global substitution. The matching rules are:
\begin{enumerate}
\item Tiles meet full-edge to full-edge. 
\item Every edge has a 1 on one side and a 0 on the other. 
\item At vertices where three arrows comes in and the fourth goes out,
the labels near the head of the central incoming arrow are 1's, the labels
near the heads of the other incoming arrows are 0's, and the labels
near the tail of the outgoing arrow are 0's, and 
\item At all other vertices, the bottom edge of the northeast tile 
has the same marking as the bottom edge of the northwest tile, and the
left edge of the northeast tile has the same marking as the left edge
of the southeast tile. 
\end{enumerate}
All of these rules are satisfied by $(X,+)$ tilings, so the $(X,+)$ tiling
space is a subspace of the T\&C tiling space. 
Adapting an argument of Goodman-Strauss', one can show 
that all T\&C tilings are obtained from $(X,+)$ tilings by applying some
shears, either all along the NE-SW axis or all along the NW-SE axis. 
\end{remark}

\subsection{How the models are related}

The relations between the corresponding tiling spaces are summarized
in the diagram
\begin{equation}\label{relations}
\begin{CD} \Omega_{X,+} @>A>> \Omega_{/,+} @>A>> \Omega_{0,+} \\
@VVBV @VVBV @VVBV \\
\Omega_{X,-} @>A>> \Omega_{/,-} @>A>> \Omega_{0,-} \\
@VVAV @VVAV @ VVCV \\
\Omega_{X,0} @>A>> \Omega_{/,0} @>C>> \Omega_{0,0}, \\
\end{CD}
\end{equation}
where each map involves the erasing of some information about arrow or
edge markings. Each of these maps
is 1:1 outside of 
the orbit of a 1-dimensional tiling subspace. 
We can then apply Proposition \ref{Cohomology.of.suspension.applied}
to compute all of the quotient
tiling cohomologies for adjacent models.

In $\Omega_{X,+}$ there are 8 tilings that are fixed by the
substitution, corresponding to a single point in $S_2 \times
S_2$. The central patches of these tilings are:
$$A = \begin{matrix} \sebox{1}{0}{1}{0} \nebox{1}{1}{1}{1} \cr
\nebox{1}{1}{1}{1} \nwbox{0}{1}{0}{1} \end{matrix}, \qquad
B = \begin{matrix} \nwbox{1}{1}{1}{1} \swbox{0}{1}{1}{0} \cr
\nebox{1}{0}{0}{1} \nwbox{1}{1}{1}{1} \end{matrix},$$
$$C = \begin{matrix} \sebox{1}{0}{1}{0} \swbox{1}{1}{1}{1} \cr
\swbox{1}{1}{1}{1} \nwbox{0}{1}{0}{1} \end{matrix}, \qquad
D = \begin{matrix} \sebox{1}{1}{1}{1} \swbox{0}{1}{1}{0} \cr
\nebox{1}{0}{0}{1} \sebox{1}{1}{1}{1} \end{matrix},$$
$$E = \begin{matrix} \nwbox{1}{0}{1}{1} \nebox{1}{1}{1}{1} \cr
\swbox{1}{0}{0}{1} \sebox{1}{1}{0}{1} \end{matrix}, \qquad
F = \begin{matrix} \nwbox{1}{1}{1}{1} \nebox{0}{1}{1}{1} \cr
\swbox{1}{1}{0}{1} \sebox{0}{1}{0}{1} \end{matrix}, $$
$$G = \begin{matrix} \nwbox{1}{1}{1}{0} \nebox{0}{1}{1}{0} \cr
\swbox{1}{1}{1}{1} \sebox{0}{1}{1}{1} \end{matrix}, \qquad
H = \begin{matrix} \nwbox{1}{0}{1}{0} \nebox{1}{1}{1}{0} \cr
\swbox{1}{0}{1}{1} \sebox{1}{1}{1}{1} \end{matrix}. $$

These tilings are asymptotic in all directions except along the
coordinate axes and along the lines of slope $\pm 1$. In each of these
directions there are two possibilities, either corresponding to edge
labels along the axes or to the direction of the arrows along the main
diagonals. The map from  $\Omega_{X,+}$ 
to $S_2 \times S_2$ is thus
8:1 on the orbits of these tilings, 2:1 on tilings obtained by
translating these tilings in one of the eight principal directions and
taking limits, and 1:1 everywhere else.

The self-similar tilings with central patch $E$ and $F$ (henceforth called
the $E$ and $F$ tilings) differ only in the labels that appear on the
$y$ axis. In the tiling space $\Omega_{X,-}$, they are therefore identified,
as are their translational orbits. Likewise,
the $G$ and $H$ tilings are identified. The identifications for all 
the spaces are summarized in the table below.

\bigskip
\vbox{\center \begin{tabular}{||l|l||}
\hline
Model & Identifications \\
\hline\hline
$(X,+)$ & none \\
\hline
$(X,-)$ & $E=F$, $G=H$ \\
\hline
$(/,+)$ & $B=D$ \\
\hline
$(X,0)$ (chair) & $E=F=G=H$ \\
\hline
$(/,-)$& $B=D$, $E=F$, $G=H$ \\
\hline
$(0,+)$ & $A=C$, $B=D$ \\
\hline
$(/,0)$ & $B=D$, $E=F=G=H$ \\
\hline
$(0,-)$ & $A=C$, $B=D$, $E=F$, $G=H$ \\
\hline
$(0,0)$ (solenoid) & $A=B=C=D=E=F=G=H$\\
\hline
\end{tabular}}
\bigskip

Note that the closure of the set $\{A-\lambda(1,1)\}$, where $\lambda$
ranges over the real numbers, is a 1-dimensional tiling subspace of
$\Omega_{X,+}$ and is isomorphic to $S_2$. The closure of 
$\{C-\lambda(1,1)\}$ is a different copy of $S_2$. The closures of
$\{B - \lambda(1,-1)\}$, $\{D - \lambda(1,-1)\}$, $\{E-\lambda(1,0)\}$,
$\{E-\lambda(0,1)\}$, $\{F-\lambda(1,0)\}$, $\{F-\lambda(0,1)\}$,
$\{G-\lambda(1,0)\}$, $\{G-\lambda(0,1)\}$, $\{H-\lambda(1,0)\}$ and
$\{H - \lambda(0,1)\}$ are additional disjoint copies of $S_2$.  
Translating 
tilings $A$--$H$ in other directions is more complicated. For instance, the 
closure of $\{B - \lambda(1,1)\}$ consists of two copies of $S_2$ and 
a copy of $\R$ that connects them. One copy of $S_2$ comes from limits as
$\lambda \to +\infty$ and equals the closure of $\{C-\lambda(1,1)\}$, 
another comes from limits as $\lambda \to -\infty$ and equals the closure
of $\{A-\lambda(1,1)\}$,
and the interpolating line corresponds to finite values of $\lambda$. 

\begin{theorem}\label{adjacent}
The adjacent tiling spaces linked by maps in (\ref{relations})
have the following quotient cohomologies. When the factor map is
labeled ``A'', we have  
$H^1_Q= \Z$ and $H^2_Q=\Z[{\textstyle \half}]$, when it is labeled ``B'' we have 
$H^1_Q= \Z$ and $H^2_Q=\Z[{\textstyle \half}]\oplus \Z$, and when it is labeled
``C'' we have 
$H^1_Q= 0$ and $H^2_Q=\Z[{\textstyle \half}]\oplus \Z$. 
All adjacent pairs of spaces have $H^k_Q=0$ for $k \ne 1,2$.
\end{theorem}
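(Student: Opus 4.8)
The plan is to compute each of the nine quotient cohomologies by applying Proposition \ref{Cohomology.of.suspension.applied} directly to each arrow in the diagram (\ref{relations}). The key is that every map erases exactly one kind of combinatorial information, and the locus where the map fails to be injective is the orbit of a one-dimensional tiling subspace, so that the hypotheses of the Proposition are met with $n=2$ and $k=1$. Under the Proposition we then have $H^m_Q(X,Y)\simeq H^{m-1}_Q(X',Y')$, reducing each two-dimensional computation to a one-dimensional one. The bulk of the work is to identify, for each arrow, the subspace $X'\subset X$ in the appropriate diagonal or axial direction $\Gamma$, the image $Y'=f(X')$, and the asymptotic identification $\sim$; these will turn out to be precisely the degenerations A, B, C of Section \ref{1D}, which is the reason for that labeling.

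First I would treat the type-A maps. Here the erased information is the direction of a pair of single-headed arrows along one main diagonal (for instance the identification $E=F$ or $G=H$), so the noninjectivity locus is the closure of an orbit such as $\{E-\lambda(1,1)\}$, which the table and the preceding discussion identify as a copy of $S_2$. The map collapses two such copies of $S_2$ to one, exactly the situation of Degeneration A, giving $H^1_Q(X',Y')=H^1(S_2)=\Z[\tfrac12]$ and $H^0_Q(X',Y')=H^0(S_2)=\Z$. Applying the Proposition with $n=2$, $k=1$ shifts degree down by one, yielding $H^1_Q=\Z$ and $H^2_Q=\Z[\tfrac12]$, with all other groups vanishing. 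For the type-B maps one erases vertical-edge labels; the subspace involved is now a copy of $\Omega_{PD}$ rather than of $S_2$ (reflecting the extra combinatorial freedom along the axis), so the relevant one-dimensional input is Degeneration B with $H^1=\Z[\tfrac12]\oplus\Z$ and $H^0=\Z$, and the dimension shift gives $H^1_Q=\Z$, $H^2_Q=\Z[\tfrac12]\oplus\Z$. For the type-C maps the erasure merges the arrow and edge data along an axis in the manner of Degeneration C, whose input $H^1=\Z[\tfrac12]\oplus\Z$ and $H^0=0$ shifts to $H^1_Q=0$, $H^2_Q=\Z[\tfrac12]\oplus\Z$.

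I expect the main obstacle to be verifying the geometric hypotheses of Proposition \ref{Cohomology.of.suspension.applied} for each individual arrow: namely that $f$ is one-to-one off $X'-\R^2$, that the only translations identifying points of $X'$ lie in $\Gamma$, and that $\sim$ is uniformly asymptotic. This requires a careful reading of the substitution rules (\ref{subX+}) and their degenerations to confirm that the fixed tilings $A$--$H$ are genuinely asymptotic in all directions except along the coordinate axes and the slope-$\pm1$ lines, and that each erasure affects only the predicted one-dimensional family. A subtlety worth flagging is the behavior of orbits such as $\{B-\lambda(1,1)\}$, whose closure involves two copies of $S_2$ joined by a line of finite-$\lambda$ tilings; one must check that this does not contribute extra nonasymptotic identifications beyond those in the chosen direction $\Gamma$. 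Once the hypotheses are confirmed arrow by arrow, the cohomology computations themselves reduce to the already-established one-dimensional examples, and the uniformity of the answer across all arrows of a given type (A, B, or C) falls out automatically.
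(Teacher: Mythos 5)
Your overall strategy coincides with the paper's: apply Proposition \ref{Cohomology.of.suspension.applied} with $n=2$, $k=1$ to each arrow of (\ref{relations}), identify the pair $(X',Y')$ with Degeneration A, B, or C, and read off the groups from the degree shift. But the entire substance of this proof lies in correctly identifying the loci $(X',Y',\Gamma)$ for each arrow, and there your proposal goes wrong in ways that would change the answer. First, you attach the wrong identifications to the type-A maps: the identifications $E=F$ and $G=H$ are what the \emph{B} maps (erasing vertical edge labels) do; the A maps identify $B=D$ (along $(1,-1)$), $A=C$ (along $(1,1)$), or, for the vertical maps from the $-$ row to the $0$ row, the class $[E=F]$ with the class $[G=H]$ along the horizontal direction. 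Relatedly, your claim that the closure of $\{E-\lambda(1,1)\}$ is a copy of $S_2$ is false: only $A$ and $C$ have solenoidal orbit closures in the $(1,1)$ direction, while the diagonal orbit closure of $E$ contains two solenoids joined by lines of finite translates --- this more complicated structure is exactly what enters the analysis of the C maps. The tilings $E,F,G,H$ have solenoidal closures only along the coordinate axes.

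Second, and more seriously, your treatment of the B maps omits the one genuinely nontrivial observation. You say the subspace is ``a copy of $\Omega_{PD}$ \dots reflecting the extra combinatorial freedom along the axis,'' but the actual mechanism is that $E$ and $H$ (and likewise $F$ and $G$) are asymptotic under translation in both vertical directions, so the closure of the union of their vertical orbits is a single copy of $\Omega_{PD}$ rather than two disjoint solenoids; hence $X'=\Omega_{PD}\times\{1,2\}$ and $Y'=\Omega_{PD}$, which is Degeneration B. Without this, the naive reading of the identifications $E=F$, $G=H$ would produce four solenoids mapping pairwise onto two, i.e., Degeneration A twice, giving $H^1_Q=\Z^2$ and $H^2_Q=\Z[\frac12]^2$ instead of the stated $\Z$ and $\Z[\frac12]\oplus\Z$. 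This example also destroys your implicit organizing principle that the \emph{kind} of erased information determines the degeneration type: the maps from the $-$ row to the $0$ row erase (horizontal) edge labels yet are type A, while the maps from $+$ to $-$ erase (vertical) edge labels and are type B; what distinguishes them is the asymptotic pairing among the fixed tilings $A$--$H$, not what is erased. Until the loci are pinned down arrow by arrow at this level of precision, the degree-shift computation --- the easy part --- cannot be trusted to yield the theorem's groups.
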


\begin{proof}
We will show that all maps are covered by Proposition 
\ref{Cohomology.of.suspension.applied}, with $k=1$ and with
the pair $(X',Y')$ being either Degeneration A, B, or C, depending on 
the label of the arrow. Since in this case 
$H^m_Q(X,Y)=H^{m-1}_Q(X', Y')$, the theorem follows. 

Consider the map from $\Omega_{X,+}$ to $\Omega_{/,+}$. This map is
1:1 everywhere except that the closure of $\{B-\lambda(1,-1)\}$ is
identified with the closure of $\{D-\lambda(1,-1)\}$, and that finite
translates of these copies of $S_2$ are also identified. This is exactly
the situation of Proposition \ref{Cohomology.of.suspension.applied}, with
$\Gamma$ being the span of $(1,-1)$, with $X'\subset \Omega_{X,+}$ 
being the union of the closures
of $\{B-\lambda(1,-1)\}$ and $\{D-\lambda(1,-1)\}$, and with 
$Y'$ being their image after identification, and with the map between
them being Degeneration A. 
%The map from $X'$ to $Y'$
%is Degeneration
%A, so by Proposition \ref{Cohomology.of.suspension.applied}, 
%$H^1_Q(\Omega_{X,+}, \Omega_{/,+})= H^0_Q(X',Y')=\Z$ and
%$H^2_Q(\Omega_{X,+}, \Omega_{/,+})=H^1_Q(X',Y')=\Z[1/2]$.
% This is two copies of $\R \times S_2$ 
%mapping to a single copy of $\R \times S_2$, in other words the suspension
%of Degeneration A.  By Corollary 3, $H^1_Q(\Omega_{X,+}, \Omega_{/,+})=\Z$
%and $H^2_Q(\Omega_{X,+}, \Omega_{/,+})=\Z[1/2]$.
The remaining maps labeled ``A'' are similar. In each case we have 
two 1-dimensional tiling subspaces, each isomorphic to $S_2$, that are
identified.

Next consider the map from $\Omega_{X,+}$ to $\Omega_{X,-}$. This is
1:1 except that $E-v$ and $F-v$ are identified for all $v\in\R^2$,
$G-v$ and $H-v$ are identified for all $v\in\R^2$, as are all pairs of
tilings obtained as limits of translations of these pairs. 
Note that
$E$ and $H$ are asymptotic under translation in both vertical
directions, so the closure of the union of $\{E-\lambda(0,1)\}$
and $\{H - \lambda(0,1)\}$ is not two solenoids. Rather, it is a 
copy of $\Omega_{PD}$. The closure of the union of $\{F-\lambda(0,1)\}$
and $\{G-\lambda(0,1)\}$ is another copy of $\Omega_{PD}$, so
$X'=\Omega_{PD} \times \{1,2\}$. The image $Y'$ of $X'$ is a single
copy of $\Omega_{PD}$ in $\Omega_{X,-}$, corresponding to the vertical
orbit closure of $\{E=F, G=H\}$. This is Degeneration B. 

The same analysis applies to the other ``B'' maps, 
from $\Omega_{/,+}$ to $\Omega_{/,-}$ 
and from $\Omega_{0,+}$ to $\Omega_{0,-}$.

The map from $\Omega_{/,0}$ to $\Omega_{0,0}$ involves 
the identification of $A$, $B$, $C$, and $E$, where we already have 
$B=D$ and $E=F=G=H$. As noted above, the closure of $\{B-\lambda(1,1)\}$
already contains the closures of $\{A-\lambda(1,1)\}$ and 
$\{C-\lambda(1,1)\}$. So does the closure of $\{E-\lambda(1,1)\}$. Let 
$X'$ be the union of these four closures. $X'$ consists of two solenoids 
and two connecting copies of $\R$, one running from the first solenoid
to the second, and the other running from the second solenoid to the first. 
This is the
inverse limit of $\Gamma_{TM'}$ under a map the doubles each circle and
preserves the connections between them. The image $Y'$ of $X'$ in 
$\Omega_{0,0}$ consists of a single copy of $S_2$, and the map
from $X'$ to $Y'$ is Degeneration C. 
The map from $\Omega_{0,-}$ to $\Omega_{0,0}$ is similar, only with
horizontal translations instead of diagonal, 
and with the identification of $A$, $B$, $E$, and $G$,
instead of $A$, $B$, $C$, and $E$. 
\end{proof}

\subsection{Torsion in quotient cohomology}
There is no torsion in the one-step quotient cohomology of Theorem
\ref{adjacent}. However, there is 3-torsion in 
$H^2_Q(\Omega_{X,0}, \Omega_{0,0})$. In this subsection we explore how
this comes about. The solenoid $\Omega_{0,0}$ has $H^1=\Z[{\textstyle \half}]^2$ and 
$H^2=\Z[{\textstyle \quarter}]$.\footnote{$\Z[{\textstyle \quarter}]$ is of course isomorphic to $\Z[{\textstyle \half}]$,
but we write 
${\textstyle \quarter}$ to emphasize that substitution is multiplication by 4,
and not by 2.}

In the chair space $\Omega_{X,0}$, 
tiles aggregate into 3-tile groups that look like
an L or a  chair \cite{Robbie}. 
The center of each chair is an arrow tile whose head is flanked by
two other arrowheads, as with the lower left tile of patch $A$, the lower 
right tile of patch $B$, the upper right tile of patch $C$ and the upper left
tile of patch $D$.  The heads of arrows of 
tiles that are not in the center of a chair are flanked by an arrowhead and
the tail of an arrow, rather than by two arrowheads. The position of a
tile within its chair can thus be determined by the local patterns of arrows. 

Consider a (pattern-equivariant) 
cochain $\alpha$ that evaluates to 1 on the middle tile of each
chair, but to zero on the outer two tiles of each chair. 
$3\alpha$ is cohomologous to a cochain $\beta$ 
that evaluates to 1 on every tile. The cochain $\beta$ is the pullback of 
the generator of $H^2(\Omega_{0,0})=\Z[{\textstyle \quarter}]$. Thus $[\alpha]$ is a 
non-trivial 3-torsion element in $H^2_Q(\Omega_{X,0}, \Omega_{0,0})$. 

Applying the long exact sequence (\ref{LEST}) 
to the triple $(\Omega_{X,0}, \Omega_{/,0}, \Omega_{0,0})$, 
we get
$$ 0 \to H_Q^1(\Omega_{X,0},\Omega_{0,0}) \to \Z \xrightarrow{\delta}
\Z[{\textstyle \half}] \oplus \Z \to H_Q^2(\Omega_{X,0}, \Omega_{0,0}) \to \Z[{\textstyle \half}] \to 0.$$
For torsion to appear in $H^2_Q(\Omega_{X,0}, \Omega_{0,0})$, 
the map $\delta$ must be injective. If fact, it is multiplication by $(0,3)$, 
and $H^2(\Omega_{X,0}, \Omega_{/,0}) = \Z[{\textstyle \half}]^2 \oplus\Z_3$. 

There is no torsion in the absolute cohomology of $\Omega_{/,0}$ or 
$\Omega_{X,0}$. We compute $H^k(\Omega_{/,0})$ from the long exact sequence of 
the pair $(\Omega_{/,0}, \Omega_{0,0})$. 
Since $H^1_Q(\Omega_{/,0}, \Omega_{0,0})=0$,
we have $H^1(\Omega_{/,0})=H^1(\Omega_{0,0})= \Z[{\textstyle \half}]^2$ and 
$$ 0 \to \Z[{\textstyle \quarter}] \to H^2(\Omega_{/,0}) \to \Z[{\textstyle \half}] \oplus \Z \to 0.$$
This sequence must split, since any preimage of a generator of $\Z[{\textstyle \half}]$ 
must be infinitely divisible by 2, so $H^2(\Omega_{/,0})= \Z[{\textstyle \quarter}]
\oplus \Z[{\textstyle \half}] \oplus \Z$.

In the long
exact sequence of the pair $(\Omega_{X,0}, \Omega_{/,0})$, 
$$0 \!\to\! \Z[{\textstyle \half}]^2 \!\to\! H^1(\Omega_{X,0}) \!\to\! \Z 
\!\xrightarrow{\delta}\!
\Z[{\textstyle \quarter}] \oplus \Z[{\textstyle \half}] \oplus \Z \!\to\! H^2(\Omega_{X,0}) \!\to\! \Z[{\textstyle \half}] 
\!\to\! 0,$$
the coboundary
map $\delta$ is multiplication
by $(-1,0,3)$. The element $(0,0,1)$, which can be represented by 
the cochain $\alpha$, is no longer a 
torsion element in
the cokernel. Rather, 3 times this element is equivalent to $(1,0,0)$, 
a generator of the 
original $\Z[{\textstyle \quarter}]$. We denote this 3-fold extension of $\Z[{\textstyle \quarter}]$ as 
$\frac{1}{3}\Z[{\textstyle \quarter}]$.

Since $\delta$ is an injection, $H^1(\Omega_{X,0})=\Z[{\textstyle \half}]^2$, with
generators that are pullbacks of the generators of $H^1(\Omega_{0,0})$,  
while $H^2(\Omega_{X,0})= \frac{1}{3}\Z[{\textstyle \quarter}] \oplus \Z[{\textstyle \half}]^2$. These results
for the chair cohomology are not new, but the derivation via quotient 
cohomology helps to elucidate each term.

\subsection{Absolute cohomologies}

We continue the process of finding the absolute cohomologies of the nine
models, and then the quotient cohomology of each model relative to the 
solenoid $\Omega_{0,0}$, by repeatedly combining the one-step quotient
cohomologies of Theorem \ref{adjacent}.

For each adjacent pair $(X,Y)$, it is possible to represent a
generator of $\Z[{\textstyle \half}] \subset H^2_Q(X,Y)$ by a cochain on $X$, which
then generates a $\Z[{\textstyle \half}]$ subgroup of $H^2(X)$.  These
representatives are described as follows: When $X$ is a / model and
$Y$ is a 0 model, the representative evaluates to +1 on every tile
whose arrow points northeast, -1 on every tile whose arrow points
southwest, and 0 on 2-headed arrows. When $X$ is an $X$ model and $Y$
is a / model, the representative evaluates to +1 on tiles whose arrows
point southeast and -1 on tiles whose arrows point northwest. This
representative, combined with the previous one, simply counts the
vector sum of all the arrows.  When $X$ is a $-$ model and $Y$ is a 0
model, the representative counts the label on the top edge of each
tile minus the label on the bottom edge. Likewise, when $X$ is a $+$
model and $Y$ is a $-$ model, the representative counts the label on
the right edge minus the label on the left. The reader can check that
whenever there are doubly-asymptotic tilings in $X$ that are
identified in $Y$, the representative evaluates differently on the
tiles in the central strip where the two tilings are different.  All
four of these representatives double with substitution, and so
generate copies of $\Z[{\textstyle \half}]$. \footnote{The attentive reader may ask
  whether our representatives could correspond to multiples of the
  generators of $\Z[{\textstyle \half}] \subset H^2_Q(X,Y)$, rather than to the
  generators themselves. Eliminating this possibility requires working
  carefully through the details of degenerations A, B and C, together
  with the proof of Proposition
  \ref{Cohomology.of.suspension.applied}.}

Since a generator of $\Z[{\textstyle \half}] \subset H^2_Q(X,Y)$ can be represented by
an element of $H^2(X)$ that is infinitely divisible by 2, the exact sequence
$$ 0 \to coker(\delta) \to H^2(X)\to H^2_Q(X,Y) \to 0 $$
splits, where 
$\delta: H^1_Q(X,Y) \to H^2(Y)$ is the coboundary map in the
long exact sequence (\ref{LES1}). For the maps marked A
and B, $H^1_Q(X,Y)=\Z$. We must determine whether this $\Z$ contributes to
$H^1(X)$ (if $\delta$ is the zero map) or cancels part of $H^2(Y)$.
Since $\delta$ commutes with substitution, an element of a $\Z$ term 
can never map to a nonzero
element of $\Z[{\textstyle \half}]$ or $\Z[{\textstyle \quarter}]$, or to a combination of the two --- 
cancellations are only possible when
$\Z$ terms of $H^2(Y)$ are involved. 

In going from  $\Omega_{X,0}$ to $\Omega_{X,-}$, and then from 
$\Omega_{X,-}$ to $\Omega_{X,+}$, there is nothing to cancel, as there are no
$\Z$ terms in $H^2(Y)$.
This implies that 
$$ H^2(\Omega_{X,+}) = \frac{1}{3}\Z[{\textstyle \quarter}] \oplus \Z[{\textstyle \half}]^4 \oplus \Z, \qquad
H^1(\Omega_{X,+}) = \Z[{\textstyle \half}]^2 \oplus \Z^2. $$

Note that all paths from $\Omega_{X,+}$ to $\Omega_{0,0}$ involve two
A degenerations, one B degeneration and one C degeneration. Since
one such path (namely $\Omega_{X,+} \to \Omega_{X,-} \to \Omega_{X,0}
\to \Omega_{/,0} \to \Omega_{0,0}$) involves a cancellation at one step, all
such paths must involve exactly one cancellation. 

These cancellations occur in the maps from $\Omega_{X,-}$ to $\Omega_{/,-}$
and from $\Omega_{X,+}$ to $\Omega_{/,+}$, and 
are identical in form to the cancellation that occurs in going from 
$\Omega_{X,0} \to \Omega_{/,0}$. In each case, the generators of 
$H^1_Q(X,Y)$ are cochains
that only see the structure of the arrows, not the edge markings, and
one can check that the coboundary map is nonzero.

Another way to see that cancellations occur in these maps, and only in 
these maps, is to work out the cohomology of $\Omega_{X,+}$ in detail, 
either via $H^*(\Omega_{X,-})$ or directly. 
Every element of $H^1(\Omega_{X,+})$ can be represented by a cochain that
is the pullback of a 
cochain on $\Omega_{/,+}$, implying that $H^1(\Omega_{/,+})$ surjects on
$H^1(\Omega_{X,+})$.  Thus the map from
$H^1(\Omega_{X,+})$ to $H^1_Q(\Omega_{X,+}, \Omega_{/,+})=\Z$ is the zero map, 
so $\delta$ is injective and there is a cancellation in going from 
$\Omega_{X,+}$ to $\Omega_{/,+}$. There then cannot be any cancellations along
any path from $\Omega_{/,+}$ to $\Omega_{0,0}$, and there must be 
a cancellation in going from $\Omega_{X,-}$ to $\Omega_{/,-}$. 

This determines all of the
remaining cohomologies, both absolute and relative to $\Omega_{0,0}$.
We summarize these calculations in two theorems:

\begin{theorem}\label{absolute.cohomologies}
The absolute cohomologies of the nine models are given as follows. All
models have $H^0=\Z$. The first cohomology is given by 
\begin{equation}\label{H1absolute}
\begin{CD} 
\Z[{\textstyle \half}]^2 \oplus \Z^2 @({A^*}(( 
\Z[{\textstyle \half}]^2 \oplus \Z^2 @({A^*}(( 
\Z[{\textstyle \half}]^2 \oplus \Z  \\
@AA{B^*}A @AA{B^*}A @AA{B^*}A \\
\Z[{\textstyle \half}]^2 \oplus \Z @({A^*}(( 
\Z[{\textstyle \half}]^2 \oplus \Z @({A^*}(( 
\Z[{\textstyle \half}]^2   \\
@AA{A^*}A @AA{A^*}A @AA{C^*}A \\
\Z[{\textstyle \half}]^2  @({A^*}(( 
\Z[{\textstyle \half}]^2 @({C^*}(( 
\Z[{\textstyle \half}]^2,   \\
\end{CD}
\end{equation}
where the positions correspond to the positions in (\ref{relations}). 
The second cohomology is given by 
\begin{equation}\label{H2absolute}
\begin{CD} 
\frac{1}{3}\Z[{\textstyle \quarter}]\oplus \Z[{\textstyle \half}]^4 \!\oplus\! \Z @({A^*}(( 
\Z[{\textstyle \quarter}]\oplus \Z[{\textstyle \half}]^3 \oplus \Z^2 @({A^*}(( 
\Z[{\textstyle \quarter}]\oplus \Z[{\textstyle \half}]^2 \oplus \Z^2\\
@AA{B^*}A @AA{B^*}A @AA{B^*}A \\
\frac{1}{3}\Z[{\textstyle \quarter}]\oplus \Z[{\textstyle \half}]^3 @({A^*}(( 
\Z[{\textstyle \quarter}]\oplus \Z[{\textstyle \half}]^2 \oplus \Z @({A^*}(( 
\Z[{\textstyle \quarter}]\oplus \Z[{\textstyle \half}] \oplus \Z\\
@AA{A^*}A @AA{A^*}A @AA{C^*}A \\
\frac{1}{3}\Z[{\textstyle \quarter}]\oplus \Z[{\textstyle \half}]^2 @({A^*}(( 
\Z[{\textstyle \quarter}]\oplus \Z[{\textstyle \half}] \oplus \Z @({C^*}(( 
\Z[{\textstyle \quarter}]. \\
\end{CD}
\end{equation}
\end{theorem}

\begin{theorem}\label{quotients}
The quotient cohomologies of the nine models, relative to the
solenoid $\Omega_{0,0}$, are given as follows. 
The first cohomology is given by 
\begin{equation}\label{H1quotient}
\begin{CD} 
\Z^2 @({A^*}(( 
\Z^2 @({A^*}(( 
\Z  \\
@AA{B^*}A @AA{B^*}A @AA{B^*}A \\
\Z @({A^*}(( 
\Z @({A^*}(( 
0   \\
@AA{A^*}A @AA{A^*}A @AA{C^*}A \\
0 @({A^*}(( 
0 @({C^*}(( 
0.   \\
\end{CD}
\end{equation}
The second cohomology is given by 
\begin{equation}\label{H2quotient}
\begin{CD} 
\Z_3 \oplus \Z[{\textstyle \half}]^4 \oplus \Z @({A^*}(( 
\Z[{\textstyle \half}]^3 \oplus \Z^2 @({A^*}(( 
\Z[{\textstyle \half}]^2 \oplus \Z^2\\
@AA{B^*}A @AA{B^*}A @AA{B^*}A \\
\Z_3 \oplus \Z[{\textstyle \half}]^3 @({A^*}(( 
\Z[{\textstyle \half}]^2 \oplus \Z @({A^*}(( 
\Z[{\textstyle \half}] \oplus \Z\\
@AA{A^*}A @AA{A^*}A @AA{C^*}A \\
\Z_3 \oplus \Z[{\textstyle \half}]^2 @({A^*}(( 
\Z[{\textstyle \half}] \oplus \Z @({C^*}(( 
0. \\
\end{CD}
\end{equation}
\end{theorem}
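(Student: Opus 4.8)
The plan is to fill in both $3\times3$ arrays inductively, starting from the solenoid corner $\Omega_{0,0}$ and peeling off one factor map at a time with the long exact sequence for a triple (\ref{LEST}). Whenever $\Omega_M\to\Omega_{M'}$ is one of the maps in (\ref{relations}) and $H^*_Q(\Omega_{M'},\Omega_{0,0})$ is already known, the triple $(\Omega_M,\Omega_{M'},\Omega_{0,0})$ gives
\[
\begin{aligned}
0\to H^1_Q(\Omega_{M'},\Omega_{0,0})&\to H^1_Q(\Omega_M,\Omega_{0,0})\to H^1_Q(\Omega_M,\Omega_{M'})\\
&\xrightarrow{\delta}H^2_Q(\Omega_{M'},\Omega_{0,0})\to H^2_Q(\Omega_M,\Omega_{0,0})\to H^2_Q(\Omega_M,\Omega_{M'})\to0,
\end{aligned}
\]
the outer terms vanishing because every space here is connected and two-dimensional, so $H^k_Q=0$ for $k\ne1,2$. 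The one-step groups $H^*_Q(\Omega_M,\Omega_{M'})$ are supplied by Theorem \ref{adjacent}, so each step reduces to (a) identifying $\delta$ and (b) splitting the resulting extension of $H^2_Q(\Omega_M,\Omega_{0,0})$. For (b) I note that every extension met has quotient a sum of copies of $\Z[1/2]$ (together with at most one $\Z[1/4]$); a generator of such a group is infinitely $2$-divisible, so a preimage may be chosen infinitely $2$-divisible as well and the sequence splits. For (a) the governing principle is that $\delta$ is equivariant for the substitution-induced endomorphisms, which act on a source $\Z$ by $\pm1$ and on the $\Z[1/2]$ (resp. $\Z[1/4]$) summands of the target by $2$ (resp. $4$); comparing the two sides shows $(\pm1-2)$ (resp. $(\pm1-4)$) annihilates the corresponding component of $\delta$, which therefore vanishes since these summands are torsion-free. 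Thus $\delta$ can be nonzero only into a free $\Z$ or a $\Z_3$ summand, and then only when the substitution signs match.

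The single genuinely nonzero connecting map is the one already pinned down in the triple $(\Omega_{X,0},\Omega_{/,0},\Omega_{0,0})$. There the generator of $H^1_Q(\Omega_{X,0},\Omega_{/,0})=\Z$ is the arrow cochain recording a tile's position inside its chair, and the relation $3\alpha\sim\beta$---with $\beta$ the pullback of the generator of $H^2(\Omega_{0,0})=\Z[1/4]$---forces $\delta$ to be multiplication by $(0,3)$ into the $\Z$ summand of $H^2_Q(\Omega_{/,0},\Omega_{0,0})=\Z[1/2]\oplus\Z$. This produces the $\Z_3$ and makes $H^1_Q(\Omega_{X,0},\Omega_{0,0})=\ker\delta=0$, fixing the bottom-left entries. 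I would then arrange the remaining computations so as to traverse this triple only once: build the bottom row $\Omega_{0,0}\leftarrow\Omega_{/,0}\leftarrow\Omega_{X,0}$ first, and raise each column by the vertical maps of (\ref{relations}). No vertical map alters the first coordinate, so none introduces a new chair generator; up the left column the target already contains the $\Z_3$ and has no free $\Z$, so equivariance gives $\delta=0$ and the $\Z_3$ is merely inherited, while up the middle and right columns each target either lacks a summand $\delta$ could reach or is excluded by equivariance, so every vertical step simply adjoins the one-step groups of Theorem \ref{adjacent}.

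The main obstacle is separating the two kinds of ``A'' map. The arrow-erasing maps $\Omega_{X,*}\to\Omega_{/,*}$ have an arrow cochain for their $H^1_Q$-generator and can feed the chair's factor of $3$ into $\delta$; the edge-erasing maps $\Omega_{*,-}\to\Omega_{*,0}$ (and the ``B'' maps $\Omega_{*,+}\to\Omega_{*,-}$) have edge-label cochains that are cohomologically independent of the arrow structure and give $\delta=0$ even when the target carries a free $\Z$. Making this independence rigorous is exactly where the explicit cochain representatives of the previous subsection do the real work: one checks that the edge generator's pullback is a coboundary only of an edge cochain, never of the arrow cochain that carries the torsion, so the two never interact. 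Once each $\delta$ is determined and each extension split, reading off the nine pairs of groups is mechanical. As a built-in check, every alternative path to $\Omega_{0,0}$ must return the same answer; confirming that the two routes to $\Omega_{X,-}$ agree---creating $\Z_3$ through a nonzero $\delta$ along $\Omega_{X,-}\to\Omega_{/,-}$ versus inheriting it along $\Omega_{X,-}\to\Omega_{X,0}$---is a convenient test of the bookkeeping.
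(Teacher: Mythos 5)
Your overall scheme --- accumulate the one-step groups of Theorem \ref{adjacent} along a spanning tree of (\ref{relations}) using the triple sequence (\ref{LEST}), determine each connecting map $\delta$, and split the resulting extensions --- is the same as the paper's, and your treatment of the bottom row (including $\delta=(0,3)$ for the chair triple) matches. The genuine gap is in your determination of the vertical connecting maps. Up the left column your own equivariance criterion does \emph{not} give $\delta=0$: the source $\Z$ carries substitution action $+1$ (it is $H^0$ of a solenoid, via Degenerations A/B), and the target $\Z_3\subset H^2_Q(\Omega_{X,0},\Omega_{0,0})$ also carries action $+1$, since $\sigma^*[\alpha]=4[\alpha]=[\alpha]$ in a group of exponent $3$. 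So ``the substitution signs match,'' a nonzero $\delta:\Z\to\Z_3$ is permitted by equivariance, and the case cannot be waved away: were $\delta$ nonzero, you would get $H^2_Q(\Omega_{X,-},\Omega_{0,0})=\Z[1/2]^3$ with no torsion, contradicting the claimed table. Up the middle and right columns the targets contain free $\Z$ summands on which substitution acts by $+1$ (they come from Degenerations B and C), so equivariance is again silent; there your justification is the asserted ``cohomological independence'' of edge cochains from arrow cochains, which is precisely the statement to be proved, not a proof.

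The paper closes these holes with a global argument you are missing. First, it computes the left column in \emph{absolute} cohomology, where the targets $H^2(\Omega_{X,0})=\frac{1}{3}\Z[1/4]\oplus\Z[1/2]^2$ and $H^2(\Omega_{X,-})$ are torsion-free with no $\Z$ summands (substitution acts by $2$ and $4$ throughout), so equivariance really is decisive there; since the triple's connecting map factors as $\delta_{\mathrm{triple}}=q_*\circ\delta_{\mathrm{pair}}$, the relative statement follows, and $H^*(\Omega_{X,+})$ is determined. Second, because every path from $\Omega_{X,+}$ to $\Omega_{0,0}$ contains two A's, one B and one C, and the endpoint cohomologies are path-independent, every such path must contain exactly one cancellation. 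Third, the surjectivity of $H^1(\Omega_{/,+})\to H^1(\Omega_{X,+})$ (every class is represented by a pullback cochain) forces the unique cancellation on the top path to occur at $\Omega_{X,+}\to\Omega_{/,+}$; hence no cancellation occurs anywhere downstream of $\Omega_{/,+}$, which kills all the vertical $\delta$'s in the middle and right columns at once. You need an argument of this kind (or an actual proof of your independence claim) before your induction closes. A smaller point: infinite $2$-divisibility of the quotient $\Z[1/2]$ does not by itself let you choose an infinitely $2$-divisible preimage ($\mathrm{Ext}(\Z[1/2],\Z)\neq 0$); the paper splits the extensions by exhibiting explicit cochain representatives on $X$ (arrow-sum and edge-label-difference cochains) that double under substitution.
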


%{\tt I've deleted the appendix. It's grungy and I don't think that
%we \\ need it. The only relevant fact is that $H^1(\Omega_{/,+})$
%surjects on $H^1(\Omega_{X,+})$.  If we want to resurrect it, you can
%find it in quotient6.tex}

\section{Tilings of finite type}\label{finite}

In 1989, Mozes \cite{Mozes} proved a remarkable theorem  relating
substitution subshifts in 2 or more dimensions to subshifts of finite type. 
Radin \cite{pinwheel} applied Mozes' ideas to the pinwheel tiling 
and Goodman-Strauss \cite{Chaim.annals} 
generalized them to tilings in general.  Although not phrased in this
language, Goodman-Strauss' results imply the following theorem:

\begin{theorem}\label{Finite.type} 
Let $\sigma$ be a tiling substitution in 2 dimensions (or more),
and let $\Omega_\sigma$ be the corresponding tiling space. Suppose that the
tiles are polygons that meet full-edge to full edge.\footnote{Or in higher 
dimensions, polyhedra that meet full-face to full face. These assumptions
can actually be relaxed considerably.} Then there exists a tiling space
$\Omega_{FT}$ whose tilings are defined by local matching rules, and a factor
map $f: \Omega_{FT} \to \Omega_\sigma$ such that (1) $f$ is everywhere 
finite:1, and 1:1 except on a set of measure zero, and (2) the set where
$f$ is not injective maps to tilings in $\Omega_\sigma$ containing 
two or more infinite-order supertiles. 
\end{theorem}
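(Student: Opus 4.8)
The plan is to let $\Omega_{FT}$ be the space of decorated tilings produced by the Mozes--Goodman-Strauss construction \cite{Mozes, Chaim.annals}, and to let $f$ be the map that erases the decorations. First I would recall that, under the stated full-edge-to-full-edge and polygonal hypotheses, Goodman-Strauss attaches to each tile a finite amount of combinatorial data encoding the tile's address within its level-$1$ supertile, the address of that supertile within the level-$2$ supertile, and so on, together with a finite set of local matching rules that force any globally consistent decoration to arise from an honest hierarchy of supertiles. The compression of this a priori infinite hierarchical data into a finite alphabet is exactly the content of \cite{Chaim.annals}, so the resulting $\Omega_{FT}$ is genuinely of finite type, and the forgetful map $f:\Omega_{FT}\to\Omega_\sigma$ is a factor map.

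Next I would dispatch surjectivity and the finite-to-one property. Surjectivity is immediate: every $T\in\Omega_\sigma$ carries at least one hierarchy of supertiles, and reading off the addresses level by level produces a legal decoration, hence a preimage. For finiteness of the fibers, the preimage $f^{-1}(T)$ is the set of consistent hierarchical decorations of $T$; since the matching rules pin down each label once the grouping of tiles into supertiles is fixed at every level, the only freedom lies in the choice of that grouping, and I would argue this freedom is bounded uniformly in $T$.

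The heart of the matter is almost-everywhere injectivity together with statement (2), and the key tool is recognizability of substitution tilings (Moss\'{e} in one dimension, Solomyak in general): a non-periodic primitive substitution tiling admits a \emph{unique} de-substitution into level-$1$ supertiles. Iterating, the entire hierarchy --- and therefore the decoration --- is uniquely determined whenever $T$ is a single infinite-order supertile, so $f$ is $1{:}1$ at such $T$. The uniqueness fails precisely when $T$ decomposes into two or more infinite-order supertiles meeting along a fault, since across such a fault the two sides may be assigned independent global addresses, yielding distinct preimages; this is statement (2). Finally, the set of tilings carrying such a fault is a measure-zero subset of $\Omega_\sigma$ for the unique ergodic measure, which gives the remaining assertion in (1).

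The step I expect to be the main obstacle is making precise the correspondence between fibers of $f$ and decompositions into infinite-order supertiles: one must verify that the \emph{only} source of non-injectivity is a genuine fault between infinite-order supertiles, and that every such fault does in fact produce extra preimages. This rigidity-away-from-faults and ambiguity-along-faults dichotomy is implicit in the construction of \cite{Chaim.annals}, and extracting it cleanly --- rather than re-deriving the whole local-rule apparatus --- is where the real work lies.
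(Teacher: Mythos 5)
The paper offers no proof of this theorem at all: it is stated as a direct consequence of Goodman-Strauss's matching-rules theorem \cite{Chaim.annals} (following Mozes \cite{Mozes}), and your proposal unpacks precisely that construction --- decorated hierarchical tilings, the forgetful factor map, recognizability, and address ambiguity along faults between infinite-order supertiles --- so it takes essentially the same approach, with the real content correctly deferred to \cite{Chaim.annals}. One small internal tension worth fixing: in your second paragraph you attribute the fiber freedom to the choice of supertile grouping, but recognizability (as you note in your third paragraph) makes that grouping unique even at fault tilings, so the non-injectivity comes entirely from independently assignable decorations on the infinite-order supertiles, not from any grouping ambiguity.
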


For measure-theoretic purposes, $\Omega_{FT}$ and $\Omega_{\sigma}$ are the
same, so the extensive analysis of substitution tilings can give us 
measure-theoretic information about some finite-type tiling spaces. 
For topological purposes, however, $\Omega_{FT}$ and $\Omega_\sigma$ are 
different, and it is known \cite{RS} that
some substitution tiling spaces are not homeomorphic to {\em any} tiling 
spaces of finite type. 

If the factor map $f$ failed to be 1:1 only over tilings in 
$\Omega_\sigma$ where infinite-order supertiles met along
horizontal boundaries, then we could apply Proposition
\ref{Cohomology.of.suspension.applied} to the pair $(\Omega_{FT},
\Omega_{\sigma})$. $Y'$ would be the space of tilings where that meeting
is exactly on the horizontal axis, and $X'$ would be the pre-image of those
tilings in $\Omega_{FT}$. 

Of course, substitution tilings have supertiles meeting along boundaries 
pointing in several directions. Still, as long as there are only finitely
many such directions (this excludes examples like the pinwheel tiling), 
we can take the quotient of $\Omega_{FT}$ one direction at a time. This is 
essentially what we did with the nine chair-like models, where the factors 
from $+$ to $-$, from $-$ to $0$, from  $X$ to $/$, and from $/$ to $0$ 
involve dismissing information along infinite vertical, horizontal, and
diagonal lines. There will be many possible orders in which we take
quotients, and we will have to choose a path from $\Omega_{FT}$ to
$\Omega_\sigma$ that makes the calculation as simple as possible.

There are complications involving tilings where more than two
infinite-order supertiles meet at a vertex. Sometimes we will have to
dismiss information specific to a finite collection of orbits, an
application of Proposition \ref{Cohomology.of.suspension.applied} with
$k=0$ rather than $k=1$.  Perhaps the spaces intermediate between
$\Omega_{FT}$ and $\Omega_{\sigma}$ will not have a ready description
as tiling spaces, but only as quotients of tiling spaces or as
extensions of tiling spaces.

These complications should not deter us. As long as there is a path
from $\Omega_{FT}$ to $\Omega_{\sigma}$, it should be possible to
compute one-step quotient cohomologies. These can then be combined,
either through repeated application of long exact sequences of pairs
or triples, or via a spectral sequence \cite{Mccleary}.

Extremely little is currently known about the topology of tiling
spaces of finite type.  Our hope, and belief, is that quotient
cohomology will open up finite type tiling spaces for topological
exploration.

\subsection*{\bf Acknowledgments.}We thank Andrew Blumberg, 
John Hunton, John McCleary, Tim Perutz and Bob Williams for helpful
discussions, and thank Margaret Combs for TeXnical and artistic help.
We also thank C.I.R.M., where part of this work was done. The
work of L.S. is partially supported by the National Science Foundation
under grant DMS-0701055.

\bigskip

\end{document}